\newtheorem{lmm}{Lemma}
\newtheorem{crl}{Corollary}
\newtheorem{thm}{Theorem}
\theoremstyle{definition}
\newtheorem{xmp}{Example}
\newtheorem{dfn}{Definition}
\newacronym{dag}{DAG}{directed acyclic graph}
\newacronym{mdag}{mDAG}{marginal directed acyclic graph}
\newacronym{bn}{BN}{Bayesian network}
\newacronym{mag}{MAG}{maximal ancestral graph}
\newacronym{ci}{CI}{conditional independence}
\newacronym{pag}{PAG}{partial ancestral graph}
\newacronym{fci}{FCI}{Fast Causal Inference}
\newacronym{pdag}{PDAG}{partially directed acyclic graph}
\newacronym{cpdag}{CPDAG}{completed partially directed acyclic graph}
\newacronym{admg}{ADMG}{acyclic directed mixed graph}
\newacronym{dfs}{DFS}{depth-first search}
\NewDocumentCommand\ind{mmg}{%
  \IfNoValueTF{#3}
    {#1 \Perp #2}
    {#1 \Perp #2 \mid #3}%
}
\NewDocumentCommand\indin{mmmg}{%
  \IfNoValueTF{#4}
    {#2 \Perp_{#1} #3}
    {#2 \Perp_{#1} #3 \mid #4}%
}
\newcommand{\adjin}[2]{\mathbf{ADJ}_{#1}(#2)}
\newcommand{\pain}[2]{\mathbf{PA}_{#1}(#2)}
\newcommand{\chin}[2]{\mathbf{CH}_{#1}(#2)}
\newcommand{\anin}[2]{\mathbf{AN}_{#1}(#2)}
\newcommand{\dein}[2]{\mathbf{DE}_{#1}(#2)}
\newcommand{\companin}[2]{\mathbf{compAN}_{#1}(#2)}
\newcommand{\painvalues}[2]{\mathbf{pa}_{#1}(#2)}
\newcommand{\painfixedvalues}[2]{\widehat{\mathbf{pa}}_{#1}(#2)}
\newcommand{\undirectededge}[1][.7em]{\mathrel{\rule[.5ex]{#1}{.5pt}}}
\newcommand{\nonadjacencyin}[3]{\neg \text{Adj}_{#1}(#2, #3)}
\DeclareMathOperator{\bnmodel}{\mathbf{BNM}}
\newenvironment{proofln}[1][\proofname]{%
  \begin{proof}[#1]$ $\par\nobreak\ignorespaces
}{%
  \end{proof}
}
\title{Towards Characterising Bayesian Network Models under Selection}
\author{Angelos P. Armen \and Robin J. Evans}
\date{\today}
\begin{document}

\begin{abstract}
Real-life statistical samples are often plagued by selection bias, which complicates drawing conclusions about the general population. When learning causal relationships between the variables is of interest, the sample may be assumed to be from a distribution in a causal Bayesian network (BN) model under selection. Understanding the constraints in the model under selection is the first step towards recovering causal structure in the original model. The conditional-independence (CI) constraints in a BN model under selection have been already characterised; there exist, however, additional, non-CI constraints in such models. In this work, some initial results are provided that simplify the characterisation problem. In addition, an algorithm is designed for identifying compelled ancestors (definite causes) from a completed partially directed acyclic graph (CPDAG). Finally, a non-CI, non-factorisation constraint in a BN model under selection is computed for the first time.
\end{abstract}

\maketitle

\section{Introduction}

Real-life statistical samples are often not from the population of interest but from a subpopulation with fixed values for a set of \emph{selection variables}. The most prominent example is case--control studies. Suppose that $S$ is a boolean variable indicating whether an individual is included (``selected'') in the study. Then a case--control sample with variables $\mathbf{V}$ may be assumed to be a random sample from the conditional distribution over $\mathbf{V}$ given $S = \text{true}$ \citep{spirtes2000causation}. We refer to conditioning on fixed values of a set of variables as \emph{selection}. Selection may create ``spurious'' dependencies, that is, dependencies that are not present in the general population, as the following example \citep{pearl2009causality} shows.

\begin{xmp}
Suppose that admission to a certain college requires either high grades or musical talent. Even if grades and musical talent are uncorrelated in the general population, they will be negatively correlated in the college population: students with low grades will be most likely musically gifted, which would explain their admission to the college, and, accordingly, students with no musical talent will probably have high grades. 
\end{xmp}

This phenomenon is known as \emph{selection bias}, \emph{Berkson's paradox} \citep{berkson1946limitations}, and the \emph{explaining away effect} \citep{kim1983computational} (in the example above, one explanation for admission renders the other one less likely).

Applying algorithms that assume a random sample in their input to selection-biased data may lead to incorrect output. Thus, several approaches have been developed to deal with selection bias in various tasks. The motivation behind this work is to improve causal structure learning from selection-biased data.

Elucidating causal relationships is of utmost importance in science. A \emph{causal \gls{dag}} represents the direct causal relationships between a set of variables (``direct'' meaning not through other variables in the set). In the absence of hidden common causes, causal feedback loops, and selection bias, the causal \gls{dag} also represents the \glspl{ci} between the variables based on the so-called \emph{Markov condition} \citep{neapolitan2004learning}; the \emph{causal \gls{bn} model} is the set of distributions that satisfy the \gls{ci} constraints encoded by the causal \gls{dag}. By performing hypothesis tests of \gls{ci} on a sample from the probability distribution over the variables, \emph{constraint-based structure learning} algorithms such as PC \citep{spirtes1999algorithm} can learn the causal \gls{bn} model, which amounts to learning (features of) the causal \gls{dag}. In the presence of selection bias (and/or hidden common causes), however, the probability distribution over the variables may no longer be in the causal \gls{bn} model and not all constraints on the distribution are \gls{ci} constraints; in that case, the PC algorithm is not appropriate.

Some previous work focussed on learning causal \gls{bn} models from selection-biased samples by correcting for the selection bias. \citet{cooper2000bayesian} devised a Bayesian method in which the biased sample is treated as a random sample with missing values for a known or unknown number of unsampled individuals, and the likelihood of the data is computed by summing over all possibilities for the missing values and the number of unsampled individuals, if unknown. This approach is computationally intractable in all but the smallest examples. In addition, it requires knowledge of the non-random-sampling process. \citet{borboudakis2015bayesian} devised a \gls{ci} test for case--control samples with categorical variables, characterised potentially spurious links when learning the skeleton of the causal \gls{dag} using a test for random samples, and proposed the use of their specialised test on these links as a post-processing approach to removing spurious links. The drawbacks of this approach is that it is not applicable to general selection-biased samples, the joint distribution over the selection variables needs to be known, and the specialised tests are less powerful than the ones for random samples.

A more general approach to deal with the problem of selection bias is to characterise (supermodels of) \gls{bn} models under selection and design algorithms for learning those models.
 
 An \emph{ordinary Markov model} is defined by the \gls{ci} constraints in a \gls{bn} model under marginalisation and/or selection \citep{shpitser2014introduction}. A \emph{\gls{mag}} \citep{richardson2002ancestral} is a graphical representation of an ordinary Markov model that can be learned using an algorithm such as FCI \citep{spirtes1999algorithm,zhang2008completeness}. A \gls{mag} also represents ancestral relationships in the \gls{dag} of a \gls{bn} model; in the case of a causal \gls{bn} model, these are (indirect) causal relationships.

There are, however, additional, non-\gls{ci} constraints in a \gls{bn} model under marginalisation and/or selection. A \emph{nested Markov model} \citep{richardson2017nested} is defined by the \gls{ci} constraints and the so-called \emph{Verma constraints} \citep{robins1986new} in a \gls{bn} model under marginalisation. These constraints may be represented by either a \emph{\gls{mdag}} \citep{evans2016graphs} or an \emph{\gls{admg}} \citep{richardson2017nested} and comprise all the equality constraints in the margin of a \gls{bn} model over a set of categorical variables \citep{evans2018margins}, although there there also exist inequality constraints such as \emph{Bell's inequalities} \citep[see, for example,][for details]{wolfe2016inflation}. In contrast to \glspl{mag}, \glspl{mdag} and \glspl{admg} represent direct causal relationships, and are therefore more expressive. In the case of selection, there exist non-\gls{ci} equality constraints as well. \citet{lauritzen1999generating} showed that there exist non-\gls{ci} \emph{factorisation} constraints. \citet{evans2015recovering} showed the existence of non-\gls{ci}, non-factorisation constraints in a certain categorical \gls{bn} model under conditioning; here we show that those constraints are also constraints in another \gls{bn} model under selection and explicitly identify the sole constraint in the case of binary variables.

In this work, some initial results are provided that simplify the problem of characterising \gls{bn} models under selection. For the sake of simplicity, the results are presented for categorical variables, but they can be easily generalised to general state spaces where a joint density with respect to a product measure exists. All proofs can be found in the appendix of this paper.

Having characterised equality constraints in \gls{bn} models under selection, it may be possible to devise a graphical representation of them. Structure-learning algorithms for selection-biased data can then be developed that make use of the non-\gls{ci} information in the data and potentially enable the learning of more causal relationships than is currently possible.

\section{Background}

In this paper, sets are in boldface (e.g., $\mathbf{S}$), $X$ is used a shortcut for the singleton $\{X\}$ in places were a set is expected, and $\mathbf{A} \dot{\cup} \mathbf{B}$ denotes the union of disjoint sets $\mathbf{A}$ and $\mathbf{B}$. Random variables are denoted by capital letters (e.g., $X$) and their values by the respective lowercase letters (e.g., $x$); fixed values are denoted by a hat (e.g., $\hat{x}$). If $\mathbf{X}$ and $\mathbf{Y}$ are random variables, $\mathbf{x} \cup \mathbf{y}$ denotes the values of $\mathbf{X} \cup \mathbf{Y}$; the same is true for set operations other than the union. Probability distributions are denoted by capital letters (e.g., $P$) and their respective probability density functions by the respective lowercase letters (e.g., $p$); $p(\mathbf{x})$ is used as a shortcut for $p(\mathbf{X} = \mathbf{x})$. If $P$ is a distribution over $\mathbf{V}$, $\mathbf{X}$, $\mathbf{Y}$, and $\mathbf{Z}$ are distinct subsets of $\mathbf{V}$, and $\mathbf{X}$ and $\mathbf{Y}$ are nonempty, the conditional independence of $\mathbf{X}$ and $\mathbf{Y}$ given $\mathbf{Z}$ in $P$ is denoted by $\indin{P}{\mathbf{X}}{\mathbf{Y}}{\mathbf{Z}}$. If $P$ is a distribution over $\mathbf{O} \dot{\cup} \mathbf{H} \dot{\cup} \mathbf{C} \dot{\cup} \mathbf{S}$, then the marginal/conditional distribution of $P$ over $\mathbf{O}$ given $\mathbf{C}$ and $\mathbf{S} = \hat{\mathbf{s}}$ is denoted by $P[_{\mathbf{H}}^{\mathbf{C},\mathbf{S} = \hat{\mathbf{s}}}$. Finally, graphs are in calligraphic (e.g., $\mathcal{G}$).

A \emph{graph} $\mathcal{G}$ is an ordered pair $(\mathbf{V}, \mathbf{E})$ of a set of nodes $\mathbf{V}$ and a set of edges $\mathbf{E}$ that connect pairs of distinct nodes in $\mathbf{V}$. If there is an edge between nodes $X$ and $Y$ in $\mathcal{G}$, then $X$ and $Y$ are \emph{adjacent} in $\mathcal{G}$. The union of the sets of nodes adjacent to nodes $\mathbf{X}$ in $\mathcal{G}$ is denoted by $\adjin{\mathcal{G}}{\mathbf{X}}$. A sequence of $n \ge 2$ nodes $(X_1, \ldots, X_n)$ such that, for $2 \le i \le n$, $X_{i - 1}$ and $X_i$ are adjacent, is called a \emph{path} from $X_1$ to $X_n$. $\mathcal{G}$ is \emph{connected} if there is a path from every node to every other node in the graph.
Let $p = (X_1, \ldots, X_n)$ be a path. The nodes $X_2, \ldots, X_{n - 1}$ are called \emph{interior} nodes on $p$. Path $(X_i, \ldots, X_j)$, where $1 \le i < j \le n$, is the \emph{subpath} of $p$ from $X_i$ to $X_j$ and is denoted by $p(X_i, Y_j)$. If $X_1 = X_n$, $p$ is a \emph{cycle}; if also $X_1, \ldots, X_{n - 1}$ are distinct, $p$ is a \emph{simple cycle}. A path is \emph{simple} if no subpath is a cycle. A \emph{triple} is a simple path with three nodes. A triple $(X, Z, Y)$ is \emph{shielded} if $X$ and $Y$ are adjacent. A simple path is shielded if there is a shielded triple on the path. Let $\mathcal{G}_1 = (\mathbf{V}_1, \mathbf{E}_1)$ and $\mathcal{G}_2 = (\mathbf{V}_2, \mathbf{E}_2)$ be graphs. $\mathcal{G}_1$ is a \emph{subgraph} of $\mathcal{G}_2$ (denoted by $\mathcal{G}_1 \subseteq \mathcal{G}_2$) if $\mathbf{V}_1 \subseteq \mathbf{V}_2$ and $\mathbf{E}_1 \subseteq \mathbf{E}_2$. The \emph{induced subgraph} of $\mathcal{G}$ over $\mathbf{A} \subseteq \mathbf{V}$ (denoted by $\mathcal{G}_{\mathbf{A}}$) is the graph with set of nodes $\mathbf{A}$ and the edges in $\mathcal{G}$ between nodes in $\mathbf{A}$.

A graph is called \emph{directed} (resp. \emph{undirected}) when its edges are directed (resp. undirected). A \emph{tree} is a connected undirected graph without (simple) cycles. In a tree, a \emph{leaf} is a node which is adjacent to a single node. If there is an edge $X \rightarrow Y$ in directed graph $\mathcal{G}$, then $X$ is a \emph{parent} of $Y$ and $Y$ a \emph{child} of $X$ in $\mathcal{G}$; the edge is said to be \emph{out of} $X$ and \emph{into} $Y$. The union of the sets of parents (resp. children) of nodes $\mathbf{X}$ in $\mathcal{G}$ is denoted by $\pain{\mathcal{G}}{\mathbf{X}}$ (resp. $\chin{\mathcal{G}}{\mathbf{X}}$). The set $X \cup \pain{\mathcal{G}}{X}$ is called the \emph{family} of $X$ in $\mathcal{G}$. A node without children is called a \emph{source} (resp. \emph{sink}). A \emph{link} is an edge without regard of direction, and the \emph{skeleton} of a directed graph is the undirected graph whose edges corresponds to links in the directed graph. A triple $(X, Y, Z)$ such that $X \rightarrow Z \leftarrow Y$ is a \emph{collider}. A path from $X$ to $Y$ is out of (resp. into) $X$ and out of (resp. into) $Y$ if the first edge of the path is out of (resp. into) $X$ and the last edge is out of (resp. into) $Y$.  A simple path from $X$ to $Y$ where all edges are directed towards $Y$ is called \emph{directed}. If there is a directed path from $X$ to $Y$ or $X = Y$, then $X$ is an \emph{ancestor} of $Y$ and $Y$ a \emph{descendant} of $X$. The union of the sets of ancestors (resp. descendants) of nodes $\mathbf{X}$ in $\mathcal{G}$ is denoted by $\anin{\mathcal{G}}{\mathbf{X}}$ (resp. $\dein{\mathcal{G}}{\mathbf{X}}$). A simple cycle $(X_1, \ldots, X_n)$ is \emph{directed} if for $2 \le i \le n$, the edge between $X_{i - 1}$ and $X_i$ is directed towards $X_i$. A \emph{\gls{dag}} is a directed graph without directed cycles. A \emph{conditional \gls{dag}} is a \gls{dag} ($\mathbf{X} \dot{\cup} \mathbf{Y}$, $\mathbf{E}$) such that the nodes in $\mathbf{Y}$ are sources \citep{evans2018margins}. The nodes in $\mathbf{X}$ and $\mathbf{Y}$ are called \emph{random nodes} and \emph{fixed nodes}, respectively; fixed nodes are drawn in a rectangle. The result of \emph{fixing} $\mathbf{A} \subseteq \mathbf{V}$ in \gls{dag} $\mathcal{G} = (\mathbf{V}, \mathbf{E})$ (denoted by $\phi_{\mathbf{A}}(\mathcal{G})$) is the conditional \gls{dag} with random nodes $\mathbf{V} \setminus \mathbf{A}$, fixed nodes $\mathbf{A}$, the edges in $\mathcal{G}$ between nodes in $\mathbf{V} \setminus \mathbf{A}$, and the edges in $\mathcal{G}$ from nodes in $\mathbf{A}$ to nodes in $\mathbf{V} \setminus \mathbf{A}$. A \emph{\gls{pdag}} is a partially directed graph without directed cycles.

Let $\mathbf{X}$ and $\mathbf{Y}$ be distinct sets of variables. A \emph{(conditional) model} over $\mathbf{X}$ given $\mathbf{Y}$ is a set of (conditional) probability distributions over $\mathbf{X}$ given $\mathbf{Y}$. 

\begin{dfn}[\gls{bn} model]
	Let $\mathbf{X}$ be a set of variables and $\mathcal{G}$ be a \gls{dag} over $\mathbf{X}$. The \emph{\gls{bn} model} defined by $\mathcal{G}$, denoted by $\bnmodel(\mathcal{G})$, is the set of distributions $P$ of $\mathbf{X}$ that satisfy the \emph{Markov condition} with $\mathcal{G}$, that is, every variable $X$ in $\mathbf{X}$ is independent of its non-descendants and non-parents in $\mathcal{G}$ given its parents in $\mathcal{G}$:
	\begin{displaymath}
		\indin{P}{X}{\mathbf{X} \setminus (\dein{\mathcal{G}}{X} \cup \pain{\mathcal{G}}{X})}{\pain{\mathcal{G}}{X}}
	\end{displaymath}
$\mathcal{G}$ is referred to as the \emph{structure} of $\bnmodel(\mathcal{G})$.
\end{dfn}

Let $\mathbf{X}$ be a set of categorical variables, $\mathcal{G}$ be a \gls{dag} over $\mathbf{X}$, and $P \in \bnmodel(\mathcal{G})$. Then $P$ equals the product of the conditional distributions of the nodes in $\mathcal{G}$ given their parents in $\mathcal{G}$: \citep[Theorem 1.4]{neapolitan2004learning}
\begin{displaymath}
	p(\mathbf{x}) = \prod_{X \in \mathbf{X}} p(x \mid \painvalues{\mathcal{G}}{X})
\end{displaymath}

Suppose that conditional distributions of the nodes in $\mathcal{G}$ given their parents in $\mathcal{G}$ are specified. Then the product of the distributions is in $\bnmodel(\mathcal{G})$ \citep[Theorem 1.5]{neapolitan2004learning}.

\gls{bn} models are widely used for causal inference. A \emph{causal \gls{bn} model} is defined by a \emph{causal \gls{dag}}, whose edges denote direct causal relationships between the variables \citep[See][for the exact definition of (direct) causation used]{neapolitan2004learning}. In the absence of hidden common causes, causal feedback loops, and selection bias, the distribution over the variables in a causal \gls{dag} $\mathcal{G}$ is in $\bnmodel(\mathcal{G})$ \citep{neapolitan2004learning}. Therefore, structure learning from a sample amounts to learning direct causal relationships between the variables in the sample.

Different \glspl{dag} may impose the same factorisation on the distribution over the variables and, therefore, impose the same \gls{ci} constraints and define the same \gls{bn} model; these \glspl{dag} are called \emph{Markov equivalent} and said to belong to the same \emph{Markov equivalence class}. Two \glspl{dag} are in the same class if and only if they have the same skeleton and unshielded colliders. The skeleton and the unshielded colliders are the same within the class. A Markov equivalence class of \glspl{dag} can be represented by a \emph{\gls{cpdag}}, a \gls{pdag} with the same skeleton as the \glspl{dag} in the class and the directed edges that are present in every \gls{dag} in the class. Any \gls{dag} in the class can be obtained by orienting the undirected edges in the \gls{cpdag}, as long as no directed cycles or new unshielded colliders are created. Without further assumptions, structure learning algorithms cannot distinguish between Markov equivalent \glspl{dag}; therefore, \glspl{dag} can be learned up to Markov equivalence.

\begin{dfn}[Hierarchical model]
	Let $\mathbf{X}$ be a set of  variables and $\mathbf{F}$ be a set of inclusion-maximal subsets of $\mathbf{X}$ such that every variable in $\mathbf{X}$ is included in at least one set in $\mathbf{F}$. The \emph{hierarchical model} defined by $\mathbf{F}$ (denoted by $\mathbf{HM}(\mathbf{F})$) is the set of distributions $P$ of $\mathbf{X}$ that are the normalised product of nonnegative functions of the values of the sets in $\mathbf{F}$:
	\begin{displaymath}
		p(\mathbf{x}) = \frac{1}{c} \prod_{\mathbf{A} \in \mathbf{F}} \phi_{\mathbf{A}}(\mathbf{a})
	\end{displaymath}
	where $\phi_{\mathbf{A}}$ is a nonnegative function of the values of $\mathbf{A}$ and
	\begin{displaymath}
		c = \sum_{\mathbf{x}} \prod_{\mathbf{A} \in \mathbf{F}} \phi_{\mathbf{A}}(\hat{\mathbf{a}}) > 0. 
	\end{displaymath}
\end{dfn}

\section{Results}
\label{sec:results}

We start by formally defining conditioning and selection of models.

\begin{dfn}[Model under conditioning]
	Let $\mathbf{M}$ be a model over $\mathbf{O} \dot{\cup} \mathbf{C} \dot{\cup} \mathbf{S}$. Then $\mathbf{M}$ after conditioning on $\mathbf{C}$ and $\mathbf{S} = \hat{\mathbf{s}}$, denoted by $\mathbf{M}[^{\mathbf{C},\mathbf{S} = \hat{\mathbf{s}}}$, is a conditional model over $\mathbf{O}$ given $\mathbf{C}$ defined as follows.
	\begin{align*}
		Q \in \mathbf{M}[^{\mathbf{C},\mathbf{S} = \hat{\mathbf{s}}} \iff \exists P \in \mathbf{M} \text{ s.t. } P[^{\mathbf{C}, \mathbf{S} =\hat{\mathbf{s}}} = Q
	\end{align*}
	When $\mathbf{C} = \emptyset$, $\mathbf{M}$ is said to be under \emph{selection}. The variables in $\mathbf{O}$, $\mathbf{C}$, and $\mathbf{S}$ are referred to as \emph{observed variables}, \emph{conditioning variables}, and \emph{selection variables}, respectively.
\end{dfn}

According the following lemma, model conditioning is commutative.

\begin{lmm}
\label{lmm:mccom}
Let $\mathbf{M}$ be a model over $\mathbf{O} \dot{\cup} \mathbf{C}_1 \dot{\cup} \mathbf{C}_2 \dot{\cup} \mathbf{S}_1 \dot{\cup} \mathbf{S}_2$. Then
\begin{displaymath}
	(\mathbf{M}^{\mathbf{C}_1,\mathbf{S}_1 = \hat{\mathbf{s}}_1})[^{\mathbf{C}_2,\mathbf{S}_2 = \hat{\mathbf{s}}_2} = \mathbf{M}[^{\mathbf{C}_1 \cup \mathbf{C}_2, \mathbf{S}_1 \cup \mathbf{S}_2 = \hat{\mathbf{s}}_1 \cup \hat{\mathbf{s}}_2}.
\end{displaymath}
\end{lmm}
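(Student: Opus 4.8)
The plan is to reduce the asserted set equality to an elementary pointwise identity about conditioning a single (conditional) distribution, and then chase the two existential quantifiers in the definition of a model under conditioning.

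First I would record the following fact: for any distribution $P$ over $\mathbf{O} \dot{\cup} \mathbf{C}_1 \dot{\cup} \mathbf{C}_2 \dot{\cup} \mathbf{S}_1 \dot{\cup} \mathbf{S}_2$ and any value $(\mathbf{c}_1, \mathbf{c}_2)$ of $\mathbf{C}_1 \cup \mathbf{C}_2$ with $p(\mathbf{c}_1, \mathbf{c}_2, \mathbf{S}_1 = \hat{\mathbf{s}}_1, \mathbf{S}_2 = \hat{\mathbf{s}}_2) > 0$, both
\[
\bigl(P[^{\mathbf{C}_1,\mathbf{S}_1 = \hat{\mathbf{s}}_1}\bigr)[^{\mathbf{C}_2,\mathbf{S}_2 = \hat{\mathbf{s}}_2} \quad\text{and}\quad P[^{\mathbf{C}_1 \cup \mathbf{C}_2, \mathbf{S}_1 \cup \mathbf{S}_2 = \hat{\mathbf{s}}_1 \cup \hat{\mathbf{s}}_2}
\]
are well defined there and have the common density $\mathbf{o} \mapsto p(\mathbf{o} \mid \mathbf{c}_1, \mathbf{c}_2, \mathbf{S}_1 = \hat{\mathbf{s}}_1, \mathbf{S}_2 = \hat{\mathbf{s}}_2)$. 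This is just the chain rule for conditional densities; the only thing to check is that the domains agree, and they do because $p(\mathbf{c}_1, \mathbf{c}_2, \mathbf{S}_1 = \hat{\mathbf{s}}_1, \mathbf{S}_2 = \hat{\mathbf{s}}_2) > 0$ forces $p(\mathbf{c}_1, \mathbf{S}_1 = \hat{\mathbf{s}}_1) > 0$ (so the inner conditioning on the left exists) and then makes the conditioning event of the outer conditioning positive. I would state and use this identity in the form actually needed --- with an extra fixed argument $\mathbf{c}_1$ carried throughout --- since the outer conditioning on the left-hand side of the lemma is applied to a conditional model given $\mathbf{C}_1$; the definition of a model under conditioning carries over verbatim to conditional models, the conditioning argument simply being carried along.

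With the pointwise identity in hand the lemma is a routine quantifier chase. For $\subseteq$, take $Q'$ in the left-hand model; unfolding the outer conditioning yields $Q \in \mathbf{M}[^{\mathbf{C}_1,\mathbf{S}_1 = \hat{\mathbf{s}}_1}$ with $Q[^{\mathbf{C}_2,\mathbf{S}_2 = \hat{\mathbf{s}}_2} = Q'$, and unfolding the inner one yields $P \in \mathbf{M}$ with $P[^{\mathbf{C}_1,\mathbf{S}_1 = \hat{\mathbf{s}}_1} = Q$; the identity then gives $P[^{\mathbf{C}_1 \cup \mathbf{C}_2, \mathbf{S}_1 \cup \mathbf{S}_2 = \hat{\mathbf{s}}_1 \cup \hat{\mathbf{s}}_2} = Q'$, witnessing $Q'$ in the right-hand model. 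For $\supseteq$, take $Q'$ in the right-hand model, so $P[^{\mathbf{C}_1 \cup \mathbf{C}_2, \mathbf{S}_1 \cup \mathbf{S}_2 = \hat{\mathbf{s}}_1 \cup \hat{\mathbf{s}}_2} = Q'$ for some $P \in \mathbf{M}$; the positivity implicit in this object being defined lets us form $Q := P[^{\mathbf{C}_1,\mathbf{S}_1 = \hat{\mathbf{s}}_1} \in \mathbf{M}[^{\mathbf{C}_1,\mathbf{S}_1 = \hat{\mathbf{s}}_1}$, and the identity gives $Q[^{\mathbf{C}_2,\mathbf{S}_2 = \hat{\mathbf{s}}_2} = Q'$, witnessing $Q'$ in the left-hand model.

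The proof is short, so there is no serious obstacle; the one place that requires care is exactly the bookkeeping of when the conditional distributions are defined. In particular the $\supseteq$ inclusion would break if ``$P[^{\mathbf{C}_1 \cup \mathbf{C}_2,\ldots}$ defined'' were strictly weaker than ``$P[^{\mathbf{C}_1,\mathbf{S}_1 = \hat{\mathbf{s}}_1}$ defined and then conditionable on $\mathbf{C}_2, \mathbf{S}_2 = \hat{\mathbf{s}}_2$''; the implication noted above --- positivity of the joint conditioning event implies positivity of the $\mathbf{S}_1 = \hat{\mathbf{s}}_1$ event --- is precisely what rules this out, and it is worth spelling out explicitly rather than leaving it to the reader.
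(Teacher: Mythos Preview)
Your proposal is correct and is precisely the detailed unpacking of the paper's one-line proof, which reads in full: ``The proof follows from commutativity of conditioning and selection of distributions.'' You have simply made explicit the pointwise identity and the two-direction quantifier chase that the paper leaves implicit, including the positivity bookkeeping needed for well-definedness.
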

\begin{proof}
The proof follows from commutativity of conditioning and selection of distributions.
\end{proof}

Let $\mathbf{O}$ and $\mathbf{S} = \{S_1, \ldots, S_{n}\}$ be distinct sets of categorical variables, $\mathbf{F} = \{\mathbf{F}_1, \ldots, \mathbf{F}_n\}$ be a set of inclusion-maximal subsets of $\mathbf{O}$ such that every variable in $\mathbf{O}$ is included in at least one set in $\mathbf{F}$, $\mathbf{E}_i$ ($1 \le i \le n$) be the set of edges from each node in $\mathbf{F}_i$ to $S_i$, and $\mathcal{G} = (\mathbf{O} \cup \mathbf{S}, \bigcup_{1 \le i \le n} \mathbf{E}_i)$. \citet{lauritzen1999generating} proved that $\bnmodel(\mathcal{G})[^{\mathbf{S} = \hat{\mathbf{s}}} = \mathbf{HM}(\mathbf{F})$, which means that every categorical hierarchical model is a \gls{bn} model under selection.

\begin{xmp}
\label{xmp:lauritzen_example}
Let $\mathcal{G}$ be the \gls{dag} in Figure \ref{fig:lauritzen_example}. Then 
\begin{displaymath}
	\bnmodel(\mathcal{G})[^{\{S_1, S_2, S_3\} = \{\hat{s_1}, \hat{s_2}, \hat{s_3}\}} = \mathbf{HM}(\{\{O_1, O_2\}, \{O_2, O_3\}, \{O_3, O_1\}\}).
\end{displaymath}
This hierarchical model is called the \emph{no three-way interaction model} in the statistical literature \citep{jobson2012applied}. Note that the corresponding \gls{mag} model is the saturated model; therefore, the constraints in the hierarchical model are non-\gls{ci} constraints.
\end{xmp}

\begin{figure}[htb]
	\centering
	\begin{tikzpicture}
		\node (S1) at (0, 0) {$S_1$};
		\node (O1) at (2, 0) {$O_1$};
		\node (S3) at (4, 0) {$S_3$};
		\node (O2) at (1, -1) {$O_2$};
		\node (O3) at (3, -1) {$O_3$};
		\node (S2) at (2, -2) {$S_2$};
		\draw[->] (O1) -- (S1);
		\draw[->] (O1) -- (S3);
		\draw[->] (O2) -- (S1);
		\draw[->] (O3) -- (S3);
		\draw[->] (O2) -- (S2);
		\draw[->] (O3) -- (S2);
	\end{tikzpicture}
	\caption{The \gls{dag} of a \gls{bn} model which, under conditioning on the values of certain variables ($S_1$, $S_2$, and $S_3$), equals a non-saturated hierarchical model (See Example \ref{xmp:lauritzen_example}).}
	\label{fig:lauritzen_example}
\end{figure}
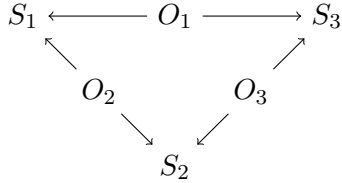

In fact, every \gls{bn} model under selection is a submodel of a (not necessarily saturated) hierarchical model; consider the following example.

\begin{xmp}
\label{xmp:shm}
Let $\mathcal{G}$ be the \gls{dag} in Figure \ref{fig:diamond}. For $P \in \bnmodel(\mathcal{G})$ such that $p(\hat{s}) > 0$ it holds that
\begin{align*}
p(o_1, o_2, o_3 \mid \hat{s}) &= c \cdot p(o_1, o_2, o_3, \hat{s}) = c \cdot p(o_1) p(o_2 \mid o_1) p(o_3 \mid o_1) p(\hat{s} \mid o_2, o_3) \\
&= c \cdot\mathrm{\phi}_{\{O_1, O_2\}}(o_1, o_2)\mathrm{\phi}_{\{O_1,O_3\}}(o_1, o_3)\mathrm{\phi}_{\{O_2,O_3\}}(o_2,o_3)
\end{align*}
where $c = 1/p(\hat{s})$ and $\mathrm{\phi}_{\mathbf{A}}$ is a nonnegative function of the values of $\mathbf{A}$. That is, $P[^{S = \hat{s}} \in \mathbf{HM}(\{\{O_1, O_2\}, \{O_1,O_3\}, \{O_2,O_3\}\})$. Note that this is the same hierarchical model as in Example \ref{fig:lauritzen_example}.
\end{xmp}

\begin{figure}[htb]
	\centering
	\begin{tikzpicture}
		\node (X1) at (1, 0) {$O_1$};
		\node (X2) at (0, -1) {$O_2$};
		\node (X3) at (2, -1) {$O_3$};
		\node (X4) at (1, -2) {$S$};
		\draw[->] (X1) -- (X2);
		\draw[->] (X1) -- (X3);
		\draw[->] (X2) -- (X4);
		\draw[->] (X3) -- (X4);
	\end{tikzpicture}
	\caption{The \gls{dag} of a \gls{bn} model which, under conditioning on the value a certain variable ($S$), is a submodel of a non-saturated hierarchical model (See Example \ref{xmp:shm}).}
	\label{fig:diamond}
\end{figure}

\begin{dfn}[Selection hierarchical model]
Let $\mathbf{O} \dot{\cup} \mathbf{S}$ be a set of variables, $\mathcal{G} = (\mathbf{O} \cup \mathbf{S}, \mathbf{E})$, $\mathbf{I}$ be the set of the intersections of the families in $\mathcal{G}$ with $\mathbf{O}$:
\begin{displaymath}
	\mathbf{I} = \{(X \cup \pain{\mathcal{G}}{X}) \cap \mathbf{O}: X \in \mathbf{O} \cup \mathbf{S}\}
\end{displaymath}
and $\mathbf{F}$ be the inclusion-maximal among the unique sets of $\mathbf{I}$. $\mathbf{HM}(\mathbf{F})$ is called the \emph{selection hierarchical model} of $\bnmodel(\mathcal{G})$ with respect to $\mathbf{S} = \hat{\mathbf{s}}$ (denoted by $\mathbf{SHM}(\mathcal{G}, \mathbf{S} =\hat{\mathbf{s}})$).
\end{dfn}

\begin{lmm}
\label{lmm:shm}
Let $\mathcal{G} = (\mathbf{O} \cup \mathbf{S}, \mathbf{E})$, where $\mathbf{O} \dot{\cup} \mathbf{S}$ is a set of categorical variables. Then
\begin{displaymath}
	\bnmodel(\mathcal{G})[^{\mathbf{S} = \hat{\mathbf{s}}} \subseteq \mathbf{SHM}(\mathcal{G}, \mathbf{S} = \hat{\mathbf{s}}).
\end{displaymath}
\end{lmm}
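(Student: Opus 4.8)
The plan is to lift the computation in Example~\ref{xmp:shm} to the general setting. Let $Q \in \bnmodel(\mathcal{G})[^{\mathbf{S} = \hat{\mathbf{s}}}$; by definition there is some $P \in \bnmodel(\mathcal{G})$ with $P[^{\mathbf{S} = \hat{\mathbf{s}}} = Q$, and in particular $p(\hat{\mathbf{s}}) > 0$, since otherwise the conditional distribution is undefined. The first step is to apply the factorisation identity for categorical \gls{bn} models recalled above: for every value $\mathbf{o}$ of $\mathbf{O}$,
\begin{displaymath}
  q(\mathbf{o}) = \frac{p(\mathbf{o}, \hat{\mathbf{s}})}{p(\hat{\mathbf{s}})} = \frac{1}{p(\hat{\mathbf{s}})} \prod_{X \in \mathbf{O} \cup \mathbf{S}} g_X(\mathbf{o}),
\end{displaymath}
where $g_X(\mathbf{o})$ denotes the conditional $p(x \mid \painvalues{\mathcal{G}}{X})$ with the fixed values $\hat{\mathbf{s}}$ substituted for every selection variable (both when $X \in \mathbf{S}$ itself and whenever a selection variable is a parent of $X$).

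The key observation is that each $g_X$ is a nonnegative function of the values of $\mathbf{I}_X := (X \cup \pain{\mathcal{G}}{X}) \cap \mathbf{O}$, because these are exactly the arguments of $p(x \mid \painvalues{\mathcal{G}}{X})$ that remain free once $\hat{\mathbf{s}}$ has been plugged in. Since $\mathbf{I}_X$ is, by construction, an element of $\mathbf{I}$, it is contained in some inclusion-maximal member $\mathbf{A} \in \mathbf{F}$. Next I would fix, for each $X \in \mathbf{O} \cup \mathbf{S}$, one such set $\mathbf{A}(X) \in \mathbf{F}$ with $\mathbf{I}_X \subseteq \mathbf{A}(X)$, and regroup the product by setting, for each $\mathbf{A} \in \mathbf{F}$,
\begin{displaymath}
  \phi_{\mathbf{A}}(\mathbf{a}) := \prod_{X \,:\, \mathbf{A}(X) = \mathbf{A}} g_X(\mathbf{o}),
\end{displaymath}
with an empty product read as $1$. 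Each $\phi_{\mathbf{A}}$ is then a nonnegative function of $\mathbf{a}$ alone, since every factor occurring in it depends only on coordinates lying in $\mathbf{I}_X \subseteq \mathbf{A}$; thus $q(\mathbf{o})$ equals $\prod_{\mathbf{A} \in \mathbf{F}} \phi_{\mathbf{A}}(\mathbf{a})$ divided by $p(\hat{\mathbf{s}})$.

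Two formalities then complete the proof. First, $\mathbf{F}$ is a legitimate index set for a hierarchical model over $\mathbf{O}$: its members are inclusion-maximal subsets of $\mathbf{O}$ by construction, and they cover $\mathbf{O}$ because $O \in \mathbf{I}_O$ for every $O \in \mathbf{O}$, so some member of $\mathbf{F}$ contains $O$ (the case $\mathbf{O} = \emptyset$ being trivial). Second, summing the displayed identity over all $\mathbf{o}$ gives $p(\hat{\mathbf{s}}) = \sum_{\mathbf{o}} \prod_{\mathbf{A} \in \mathbf{F}} \phi_{\mathbf{A}}(\mathbf{a})$, so the required normalising constant is finite and strictly positive. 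Hence $Q \in \mathbf{HM}(\mathbf{F}) = \mathbf{SHM}(\mathcal{G}, \mathbf{S} = \hat{\mathbf{s}})$, which is what we had to show.

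I do not anticipate a genuine obstacle: the argument is essentially the bookkeeping that upgrades Example~\ref{xmp:shm} to a general claim. The two points that need a little care are making the assignment $X \mapsto \mathbf{A}(X)$ precise, so that every conditional factor is used exactly once while each $\mathbf{A} \in \mathbf{F}$ still receives a (possibly empty, hence trivial) product, and the degenerate cases in which $\mathbf{I}_X = \emptyset$ for some $X$ — for instance a selection node all of whose parents are selection nodes — so that $g_X$ is a constant; since $\emptyset$ is contained in every member of $\mathbf{F}$ (and, unless $\mathbf{O} = \emptyset$, is not itself in $\mathbf{F}$), such a constant is harmlessly folded into any one $\phi_{\mathbf{A}}$.
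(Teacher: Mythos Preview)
Your proposal is correct and follows exactly the approach the paper intends: the paper's own proof is the single sentence ``The proof follows from the factorisation imposed by $\bnmodel(\mathcal{G})$ on its distributions,'' and your argument is precisely the bookkeeping that spells this out. The care you take with the assignment $X \mapsto \mathbf{A}(X)$ and the degenerate $\mathbf{I}_X = \emptyset$ case goes beyond what the paper records, but it is in the same spirit and fills in details the paper leaves implicit.
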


We refer to the constraints in the selection hierarchical model as \emph{factorisation} constraints. In the end of this section, an example of a non-\gls{ci}, non-factorisation constraint is given.

The results that follow simplify the characterisation problem.

Owing to the lemma below, it is sufficient to characterise \gls{bn} models under selection when the selection nodes are sinks.

\begin{lmm}
\label{lmm:svs_are_sinks}
Let $\mathbf{O} \dot{\cup} \mathbf{S}$ be a set of categorical variables, $\mathcal{G} = (\mathbf{O} \cup \mathbf{S}, \mathbf{E})$, and $\mathcal{G}'$ be the subgraph of $\mathcal{G}$ with all edges out of $\mathbf{S}$ removed. Then
	\begin{displaymath}
		\bnmodel(\mathcal{G}')[^{\mathbf{S} = \hat{\mathbf{s}}} = \bnmodel(\mathcal{G})[^{\mathbf{S} = \hat{\mathbf{s}}}.
	\end{displaymath}
\end{lmm}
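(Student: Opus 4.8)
The plan is to show set-containment in both directions between $\bnmodel(\mathcal{G}')[^{\mathbf{S} = \hat{\mathbf{s}}}$ and $\bnmodel(\mathcal{G})[^{\mathbf{S} = \hat{\mathbf{s}}}$, exploiting the factorisation of categorical BN distributions. Write $\mathbf{S} = \{S_1, \dots, S_n\}$ and note that $\mathcal{G}'$ differs from $\mathcal{G}$ only in that every edge $S_i \to X$ with $X \in \mathbf{O} \cup \mathbf{S}$ has been deleted; consequently $\pain{\mathcal{G}'}{X} = \pain{\mathcal{G}}{X} \setminus \mathbf{S}$ for every node $X$, and $\pain{\mathcal{G}'}{X} = \pain{\mathcal{G}}{X}$ whenever $X \in \mathbf{O}$ has no selection-node parent. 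A key preliminary observation is that $\mathcal{G}'$ is still a DAG (deleting edges cannot create a cycle), so $\bnmodel(\mathcal{G}')$ is well-defined and the product formula from \citet[Theorem 1.4, 1.5]{neapolitan2004learning} applies to both models.

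For the inclusion $\bnmodel(\mathcal{G}')[^{\mathbf{S} = \hat{\mathbf{s}}} \subseteq \bnmodel(\mathcal{G})[^{\mathbf{S} = \hat{\mathbf{s}}}$, I would take any $P' \in \bnmodel(\mathcal{G}')$ and build $P \in \bnmodel(\mathcal{G})$ that agrees with $P'$ after selection on $\mathbf{S} = \hat{\mathbf{s}}$. The natural construction is to specify the conditional $p(x \mid \painvalues{\mathcal{G}}{X})$ for each node: for $X \notin \companin{}{}$—more precisely, for nodes whose $\mathcal{G}$-parent set contains no element of $\mathbf{S}$—just reuse $p'(x \mid \painvalues{\mathcal{G}'}{X})$, and for a node $X$ that does have selection-node parents, define the conditional so that at the slice $\mathbf{S} = \hat{\mathbf{s}}$ it reproduces $p'(x \mid \painvalues{\mathcal{G}'}{X})$ while taking arbitrary (say uniform) values on the other slices of the extra $\mathbf{S}$-coordinates. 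By \citet[Theorem 1.5]{neapolitan2004learning}, the product of these conditionals lies in $\bnmodel(\mathcal{G})$. Then $p(\mathbf{o}, \hat{\mathbf{s}})$ is a product over $X \in \mathbf{O} \cup \mathbf{S}$; the selection-node factors $p(\hat{s}_i \mid \painvalues{\mathcal{G}}{S_i})$ depend only on the $\mathbf{O}$-parts of their parent sets (the selection coordinates of the parents are fixed to $\hat{\mathbf{s}}$), so after dividing by $p(\hat{\mathbf{s}})$ and matching terms we recover exactly $p'(\mathbf{o}, \hat{\mathbf{s}}) / p'(\hat{\mathbf{s}})$, i.e. $P[^{\mathbf{S} = \hat{\mathbf{s}}} = P'[^{\mathbf{S} = \hat{\mathbf{s}}}$. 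One must check that $p(\hat{\mathbf{s}}) > 0$ exactly when $p'(\hat{\mathbf{s}}) > 0$ so that the conditionals are defined in the same cases.

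For the reverse inclusion $\bnmodel(\mathcal{G})[^{\mathbf{S} = \hat{\mathbf{s}}} \subseteq \bnmodel(\mathcal{G}')[^{\mathbf{S} = \hat{\mathbf{s}}}$, given $P \in \bnmodel(\mathcal{G})$ with $p(\hat{\mathbf{s}}) > 0$, I would define $P' \in \bnmodel(\mathcal{G}')$ directly by its conditionals in $\mathcal{G}'$: for $X \in \mathbf{O}$ set $p'(x \mid \painvalues{\mathcal{G}'}{X}) := p(x \mid \painvalues{\mathcal{G}}{X}, \hat{\mathbf{s}}_{\cap})$, i.e. the $P$-conditional with the (now-removed) selection parents plugged in at their fixed values, and for $X = S_i$ set $p'(s_i) := \mathbf{1}[s_i = \hat{s}_i]$, a point mass—this is legitimate because in $\mathcal{G}'$ the nodes in $\mathbf{S}$ are sources, so their marginals may be chosen freely. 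One should verify each such conditional is a genuine probability distribution (nonnegativity and summation to one are inherited from $P$, and positivity of the conditioning event follows from $p(\hat{\mathbf{s}}) > 0$ together with the ancestral structure). The product of these conditionals is in $\bnmodel(\mathcal{G}')$ by \citet[Theorem 1.5]{neapolitan2004learning}, and selecting on $\mathbf{S} = \hat{\mathbf{s}}$ collapses the $\mathbf{S}$-point-masses and leaves a product over $\mathbf{O}$ that telescopes back to $p(\mathbf{o} \mid \hat{\mathbf{s}})$.

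The main obstacle, and the step deserving the most care, is the bookkeeping around parent sets that mix observed and selection nodes: one has to be precise that conditioning a $\mathcal{G}$-conditional on the fixed selection values yields a valid $\mathcal{G}'$-conditional, and—more delicately—that the normalising constant $p(\hat{\mathbf{s}})$ factors compatibly so that the two constructions are mutual inverses on the selection slice. A clean way to organise this is to pick a topological order of $\mathcal{G}$ (which also topologically orders $\mathcal{G}'$ since $\mathbf{E}' \subseteq \mathbf{E}$), write $p(\mathbf{o}, \hat{\mathbf{s}}) = \prod_{X \in \mathbf{O} \cup \mathbf{S}} p(x \mid \painvalues{\mathcal{G}}{X})$ with every $S_i$-coordinate set to $\hat{s}_i$, and observe that this equals $\big(\prod_{X \in \mathbf{O}} p'(x \mid \painvalues{\mathcal{G}'}{X})\big) \cdot \prod_i p(\hat{s}_i \mid \painvalues{\mathcal{G}}{S_i})$; dividing by $p(\hat{\mathbf{s}})$ gives the selection distribution in a form manifestly realisable in $\bnmodel(\mathcal{G}')$, and running the identity backwards handles the other direction. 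A degenerate case worth a sentence is $\mathbf{S} = \emptyset$, where $\mathcal{G}' = \mathcal{G}$ and there is nothing to prove.
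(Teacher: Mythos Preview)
Your plan contains a genuine error in the reverse inclusion $\bnmodel(\mathcal{G})[^{\mathbf{S} = \hat{\mathbf{s}}} \subseteq \bnmodel(\mathcal{G}')[^{\mathbf{S} = \hat{\mathbf{s}}}$. You assert that in $\mathcal{G}'$ the nodes in $\mathbf{S}$ are \emph{sources} and therefore set $p'(s_i) := \mathbf{1}[s_i = \hat{s}_i]$. This is false: removing edges \emph{out of} $\mathbf{S}$ makes the selection nodes \emph{sinks}; edges from $\mathbf{O}$ into $\mathbf{S}$ survive, so each $S_i$ may still have parents in $\mathcal{G}'$, and the $\mathcal{G}'$-factor for $S_i$ is a conditional $p'(s_i \mid \painvalues{\mathcal{G}'}{S_i})$, not a marginal. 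More importantly, even if you set this conditional to the constant $\mathbf{1}[s_i = \hat{s}_i]$, the construction fails. Take $\mathcal{G}$ to be $O_1 \to S \to O_2$, so $\mathcal{G}'$ is $O_1 \to S$ with $O_2$ isolated. Your $P'$ gives $p'(o_1, o_2 \mid \hat{s}) = p(o_1)\,p(o_2 \mid \hat{s})$, whereas $p(o_1, o_2 \mid \hat{s}) = p(o_1 \mid \hat{s})\,p(o_2 \mid \hat{s})$; these differ whenever $O_1$ and $S$ are dependent. The problem is that the factor $\prod_i p(\hat{s}_i \mid \painvalues{\mathcal{G}}{S_i})$ in your final display depends on $\mathbf{o}$ through the parent values, so dividing by the constant $p(\hat{\mathbf{s}})$ does not absorb it. The paper's fix is exactly to keep that dependence: define $r(s_i \mid \painvalues{\mathcal{G}'}{S_i}) := q(s_i \mid \painvalues{\mathcal{G}'}{S_i}, \painfixedvalues{\mathcal{G}}{S_i} \cap \hat{\mathbf{s}})$, i.e.\ reuse the original conditional with any selection-node parents frozen at $\hat{\mathbf{s}}$. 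Then $r(\mathbf{o}, \hat{\mathbf{s}}) = q(\mathbf{o}, \hat{\mathbf{s}})$ identically, and the selection distributions agree.

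For the forward inclusion you work harder than necessary. Since $\mathcal{G}'$ is a subgraph of $\mathcal{G}$, every distribution Markov to $\mathcal{G}'$ is already Markov to $\mathcal{G}$, so $\bnmodel(\mathcal{G}') \subseteq \bnmodel(\mathcal{G})$ and the inclusion of the selected models is immediate; no explicit extension of conditionals is needed. Your construction there is not wrong, just redundant.
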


Based on the following lemma and due to commutativity of model conditioning (Lemma \ref{lmm:mccom}), only cases where the sets of parents of the selection nodes are non-nested need to be considered.

\begin{lmm}
\label{lmm:svs_parents_are_non_nested}
Let $\mathbf{O} \dot{\cup} \mathbf{S}$ be a set of categorical variables, $\{S_1, S_2\} \subseteq \mathbf{S}$, $\mathcal{G} = (\mathbf{O} \cup \mathbf{S}, \mathbf{E})$ such that $\chin{\mathcal{G}}{\mathbf{S}} = \emptyset$ and $\pain{\mathcal{G}}{S_2} \subseteq \pain{\mathcal{G}}{S_1}$, $\mathbf{S}' = \mathbf{S} \setminus \{S_2\}$, and $\mathcal{G}' = \mathcal{G}_{\mathbf{O} \cup \mathbf{S}'}$. Then
	\begin{displaymath}
		\bnmodel(\mathcal{G}')[^{\mathbf{S}' = \hat{\mathbf{s}}'} = \bnmodel(\mathcal{G})[^{\mathbf{S} = \hat{\mathbf{s}}}.
	\end{displaymath}
\end{lmm}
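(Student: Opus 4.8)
The plan is to prove the two inclusions separately, each time starting from a distribution realising a member of one selected \gls{bn} model and exhibiting a distribution in the other (original) \gls{bn} model with the same selected conditional; membership in the target \gls{bn} model will be guaranteed by the fact recalled above that a product of conditional distributions of the nodes of a \gls{dag} given their parents lies in the \gls{bn} model of that \gls{dag}. First I would record the consequences of $\chin{\mathcal{G}}{\mathbf{S}} = \emptyset$ that make the two factorisations comparable: every selection node is a sink, so the parents of every node of $\mathbf{O}\cup\mathbf{S}$ lie in $\mathbf{O}$ — in particular $\pain{\mathcal{G}}{S_2}\subseteq\pain{\mathcal{G}}{S_1}\subseteq\mathbf{O}$ — and deleting the sink $S_2$ removes only edges into $S_2$, so $\mathcal{G}'_{\mathbf{O}} = \mathcal{G}_{\mathbf{O}}$ with unchanged parent sets and $\pain{\mathcal{G}'}{S} = \pain{\mathcal{G}}{S}$ for every $S\in\mathbf{S}\setminus\{S_2\}$. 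Writing $\mathbf{P}_i := \pain{\mathcal{G}}{S_i}$, for $P\in\bnmodel(\mathcal{G})$ the chain-rule factorisation then reads $p(\mathbf{o},\hat{\mathbf{s}}) = f_P(\mathbf{o})\prod_{S\in\mathbf{S}}p(\hat{s}\mid\painvalues{\mathcal{G}}{S})$ with $f_P(\mathbf{o}) := \prod_{O\in\mathbf{O}}p(o\mid\painvalues{\mathcal{G}}{O})$, and summing over $\mathbf{s}$ gives $p(\mathbf{o}) = f_P(\mathbf{o})$; the analogue holds in $\mathcal{G}'$.

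For $\bnmodel(\mathcal{G}')[^{\mathbf{S}'=\hat{\mathbf{s}}'}\subseteq\bnmodel(\mathcal{G})[^{\mathbf{S}=\hat{\mathbf{s}}}$ — the easy direction, which does not use the nesting hypothesis — given $P'\in\bnmodel(\mathcal{G}')$ with $p'(\hat{\mathbf{s}}')>0$ I would define $P$ on $\mathbf{O}\cup\mathbf{S}$ by keeping all the parent-conditionals of $P'$ for the nodes of $\mathbf{O}\cup\mathbf{S}'$ and adjoining for $S_2$ the degenerate conditional $p(s_2\mid\mathbf{p}_2) := \mathbf{1}[s_2 = \hat{s}_2]$. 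Then $P\in\bnmodel(\mathcal{G})$, the new factor $p(\hat{s}_2\mid\mathbf{p}_2)$ is identically $1$, so $p(\mathbf{o},\hat{\mathbf{s}}) = p'(\mathbf{o},\hat{\mathbf{s}}')$; summing gives $p(\hat{\mathbf{s}}) = p'(\hat{\mathbf{s}}')>0$ and dividing gives $p(\mathbf{o}\mid\hat{\mathbf{s}}) = p'(\mathbf{o}\mid\hat{\mathbf{s}}')$.

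For the reverse inclusion, given $P\in\bnmodel(\mathcal{G})$ with $p(\hat{\mathbf{s}})>0$ I would build $P'$ on $\mathbf{O}\cup\mathbf{S}'$ by keeping the parent-conditionals of $P$ for every node of $\mathbf{O}$ and every $S_j$ with $j\notin\{1,2\}$, and replacing the conditional of $S_1$ by one with $p'(\hat{s}_1\mid\mathbf{p}_1) := p(\hat{s}_1\mid\mathbf{p}_1)\,p(\hat{s}_2\mid\mathbf{p}_2)$, where $\mathbf{p}_2$ is the restriction of the assignment $\mathbf{p}_1$ to $\mathbf{P}_2$ — well-defined precisely because $\mathbf{P}_2\subseteq\mathbf{P}_1$ — and the leftover mass $1-p'(\hat{s}_1\mid\mathbf{p}_1)\in[0,1]$ spread, say uniformly, over the remaining states of $S_1$. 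Since $\painvalues{\mathcal{G}'}{O} = \painvalues{\mathcal{G}}{O}$ we get $f_{P'} = f_P$; $P'\in\bnmodel(\mathcal{G}')$ by the above fact; and multiplying out the factorisation of $\mathcal{G}'$ with the new $S_1$-factor reconstitutes exactly $\prod_{S\in\mathbf{S}}p(\hat{s}\mid\painvalues{\mathcal{G}}{S})$, so $p'(\mathbf{o},\hat{\mathbf{s}}') = p(\mathbf{o},\hat{\mathbf{s}})$, whence $p'(\hat{\mathbf{s}}') = p(\hat{\mathbf{s}})>0$ and $p'(\mathbf{o}\mid\hat{\mathbf{s}}') = p(\mathbf{o}\mid\hat{\mathbf{s}})$.

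The one point needing care — and the place I expect the main obstacle — is the redistribution of the leftover probability in this last construction: it requires $S_1$ to have at least two states, so I would note at the start that we may assume without loss of generality that every variable is non-degenerate, since a single-state variable carries no information and conditioning on it is vacuous. Apart from that, the argument is just bookkeeping with the chain rule, and the sink observations (parents of all nodes lie in $\mathbf{O}$; $\mathcal{G}'$ agrees with $\mathcal{G}$ away from $S_2$) are what make the two factorisations line up factor by factor.
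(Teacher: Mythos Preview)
Your proposal is correct and follows essentially the same approach as the paper: in both directions you construct a distribution in the target \gls{bn} model by copying all parent-conditionals and, for the extra or absorbed selection node, either adjoining the degenerate conditional $p(\hat{s}_2\mid\mathbf{p}_2)=1$ or replacing $S_1$'s conditional by the product $p(\hat{s}_1\mid\mathbf{p}_1)\,p(\hat{s}_2\mid\mathbf{p}_2)$, exactly as the paper does. Your treatment is in fact slightly more careful than the paper's, since you flag the need for $S_1$ to have at least two states so that the leftover mass can be redistributed --- a point the paper leaves implicit.
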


We note that Lemma \ref{lmm:svs_are_sinks} and \ref{lmm:svs_parents_are_non_nested} are dual to Lemma 1 and 2, respectively, about marginalisation, in \citet{evans2016graphs}. 

The notion of \emph{conditional} \gls{bn} model is needed for the theorem that follows.

\begin{dfn}[Conditional \gls{bn} model]
Let $\mathbf{X}$ and $\mathbf{Y}$ be distinct sets of variables and $\mathcal{G}$ be a conditional \gls{dag} with random nodes $\mathbf{X}$ and fixed nodes $\mathbf{Y}$. The \emph{conditional \gls{bn} model} defined by $\mathcal{G}$, denoted by $\bnmodel(\mathcal{G})$, is the set of conditional distributions $P$ of $\mathbf{X}$ given $\mathbf{Y}$ that satisfy the \emph{conditional Markov condition} with $\mathcal{G}$, that is, every variable $X$ in $\mathbf{X}$ is independent of its non-descendants and non-parents in $\mathcal{G}$ given its parents in $\mathcal{G}$:
	\begin{displaymath}
		\indin{P}{X}{\mathbf{X} \setminus (\dein{\mathcal{G}}{X} \cup \pain{\mathcal{G}}{X})}{\pain{\mathcal{G}}{X}}
	\end{displaymath}
\end{dfn}

Clearly, a \gls{bn} model is a conditional \gls{bn} model.

Let $\mathbf{X}$ and $\mathbf{Y}$ be distinct sets of categorical variables, $\mathcal{G}$ be a conditional \gls{dag} with random nodes $\mathbf{X}$ and fixed nodes $\mathbf{Y}$, and $P \in \bnmodel(\mathcal{G})$. The proof of Theorem 1.4 in \citet{neapolitan2004learning} can be easily adapted to show that $P$ equals the product of the conditional distributions of the random nodes in $\mathcal{G}$ given their parents in $\mathcal{G}$:
\begin{displaymath}
	p(\mathbf{x} \mid \mathbf{y}) = \prod_{X \in \mathbf{X}} p(x \mid \painvalues{\mathcal{G}}{X})
\end{displaymath}

Suppose that conditional distributions of the nodes in $\mathcal{G}$ given their parents in $\mathcal{G}$ are specified. It is straightforward to adapt the proof of Theorem 1.5 in \citet{neapolitan2004learning} to show that the product of the distributions is in $\bnmodel(\mathcal{G})$.

According to the theorem below, selection only affects the ancestors of the selection nodes; it is therefore sufficient to characterise the case where all nodes are ancestors of the selection nodes. 

\begin{thm}
\label{thm:ancestors}
Let $\mathbf{O} \dot{\cup} \mathbf{S}$ be a set of categorical variables, $\mathcal{G} = (\mathbf{O} \cup \mathbf{S}, \mathbf{E})$ such that $\chin{\mathcal{G}}{\mathbf{S}} = \emptyset$, $\mathbf{X} = \anin{\mathcal{G}}{\mathbf{S}}$, $\mathbf{Y} = \mathbf{O} \setminus \mathbf{X}$, $\mathcal{G}_1 = \mathcal{G}_{\mathbf{X}}$, $\mathcal{G}_2 = \phi_{\mathbf{X} \setminus \mathbf{S}}(\mathcal{G}_{\mathbf{O}})$,  $P$ be a distribution over $\mathbf{O}$ such that $p(\mathbf{x} \setminus \mathbf{s}) > 0$, $P_1 = P[_{\mathbf{Y}}$, and $P_2 = P[^{\mathbf{X} \setminus \mathbf{S}}$. Then
	\begin{displaymath}
		P \in \bnmodel(\mathcal{G})[^{\mathbf{S} = \hat{\mathbf{s}}} \iff P_1 \in \bnmodel(\mathcal{G}_1)[^{\mathbf{S} = \hat{\mathbf{s}}} \land P_2 \in \bnmodel(\mathcal{G}_2)
	\end{displaymath}
\end{thm}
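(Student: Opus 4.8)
The plan is to reduce everything to the chain-rule factorisation of a \gls{bn} distribution according to $\mathcal{G}$, exploiting two structural facts. Write $\mathbf{A} := \mathbf{X} \setminus \mathbf{S}$, so that $\mathbf{O} = \mathbf{A} \dot{\cup} \mathbf{Y}$. Because every node of $\mathbf{S}$ is a sink, no node has a parent in $\mathbf{S}$; and because $\mathbf{X} = \anin{\mathcal{G}}{\mathbf{S}}$ is ancestrally closed, every parent of a node of $\mathbf{X}$ lies in $\mathbf{X}$. Combining the two, for $V \in \mathbf{A}$ we have $\pain{\mathcal{G}}{V} \subseteq \mathbf{A}$, for $S \in \mathbf{S}$ we have $\pain{\mathcal{G}}{S} \subseteq \mathbf{A}$, and for $Y \in \mathbf{Y}$ we have $\pain{\mathcal{G}}{Y} \subseteq \mathbf{O}$. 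Moreover $\pain{\mathcal{G}_1}{V} = \pain{\mathcal{G}}{V}$ for $V \in \mathbf{X}$ (induced subgraph on an ancestrally closed set) and $\pain{\mathcal{G}_2}{Y} = \pain{\mathcal{G}}{Y}$ for $Y \in \mathbf{Y}$ (fixing $\mathbf{A}$ deletes only edges into $\mathbf{A}$, and $\mathbf{Y}$-nodes have the same parents in $\mathcal{G}_{\mathbf{O}}$ as in $\mathcal{G}$ since they have no $\mathbf{S}$-parents). Consequently, for any $\tilde{P} \in \bnmodel(\mathcal{G})$ the identity $\tilde{p}(\mathbf{o}, \mathbf{s}) = \prod_{V \in \mathbf{O} \cup \mathbf{S}} \tilde{p}(v \mid \painvalues{\mathcal{G}}{V})$ splits into a product of three blocks: one over $\mathbf{A}$ depending only on $\mathbf{a}$, one over $\mathbf{S}$ depending on $(\mathbf{a}, \mathbf{s})$, and one over $\mathbf{Y}$ depending on $(\mathbf{a}, \mathbf{y})$.

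For the ($\Rightarrow$) direction, pick a witness $\tilde{P} \in \bnmodel(\mathcal{G})$ with $\tilde{P}[^{\mathbf{S} = \hat{\mathbf{s}}} = P$; then $\tilde{p}(\hat{\mathbf{s}}) > 0$, and since $p(\mathbf{x} \setminus \mathbf{s}) = p(\mathbf{a}) = \tilde{p}(\mathbf{a} \mid \hat{\mathbf{s}}) > 0$ we also get $\tilde{p}(\mathbf{a}) > 0$ and $\tilde{p}(\mathbf{a}, \hat{\mathbf{s}}) > 0$, so every conditional density below is defined. Summing the three-block identity over $\mathbf{y}$ in a topological order (each $\mathbf{Y}$-factor summing to one) shows the $\mathbf{X}$-marginal $\bar{P}$ of $\tilde{P}$ factorises according to $\mathcal{G}_1$; since $p_1(\mathbf{a}) = p(\mathbf{a}) = \bar{p}(\mathbf{a} \mid \hat{\mathbf{s}})$ we obtain $P_1 = \bar{P}[^{\mathbf{S} = \hat{\mathbf{s}}} \in \bnmodel(\mathcal{G}_1)[^{\mathbf{S} = \hat{\mathbf{s}}}$. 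Summing instead over $\mathbf{s}$, then evaluating the three-block identity at $\mathbf{s} = \hat{\mathbf{s}}$ and dividing by $\tilde{p}(\hat{\mathbf{s}})$, gives $p(\mathbf{a}, \mathbf{y}) = p(\mathbf{a}) \prod_{Y \in \mathbf{Y}} \tilde{p}(y \mid \painvalues{\mathcal{G}}{Y})$, hence $p_2(\mathbf{y} \mid \mathbf{a}) = \prod_{Y \in \mathbf{Y}} \tilde{p}(y \mid \painvalues{\mathcal{G}_2}{Y})$, which by the conditional analogue of Neapolitan's Theorem 1.5 places $P_2$ in $\bnmodel(\mathcal{G}_2)$.

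For ($\Leftarrow$), pick a witness $\bar{P} \in \bnmodel(\mathcal{G}_1)$ with $\bar{P}[^{\mathbf{S} = \hat{\mathbf{s}}} = P_1$ (so $\bar{p}(\hat{\mathbf{s}}) > 0$ and $\bar{p}(\mathbf{a}, \hat{\mathbf{s}}) > 0$) and define the candidate $\tilde{p}(\mathbf{a}, \mathbf{s}, \mathbf{y}) := \bar{p}(\mathbf{a}, \mathbf{s})\, p_2(\mathbf{y} \mid \mathbf{a})$. Expanding $\bar{p}$ by its $\mathcal{G}_1$-factorisation and $p_2$ by its $\mathcal{G}_2$-factorisation and invoking the parent-set coincidences above shows $\tilde{p}$ equals $\prod_{V \in \mathbf{O} \cup \mathbf{S}} \tilde{p}(v \mid \painvalues{\mathcal{G}}{V})$, a product of valid conditional densities, so $\tilde{P} \in \bnmodel(\mathcal{G})$ by Neapolitan's Theorem 1.5; it is a probability distribution because $\sum_{\mathbf{y}} p_2(\mathbf{y} \mid \mathbf{a}) = 1$. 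Finally $\tilde{p}(\hat{\mathbf{s}}) = \bar{p}(\hat{\mathbf{s}}) > 0$ and $\tilde{p}(\mathbf{a}, \mathbf{y} \mid \hat{\mathbf{s}}) = \bar{p}(\mathbf{a} \mid \hat{\mathbf{s}})\, p_2(\mathbf{y} \mid \mathbf{a}) = p_1(\mathbf{a})\, p_2(\mathbf{y} \mid \mathbf{a}) = p(\mathbf{a})\, p(\mathbf{y} \mid \mathbf{a}) = p(\mathbf{a}, \mathbf{y})$, where the last two steps use the definitions of $P_1, P_2$ and the positivity hypothesis; hence $\tilde{P}[^{\mathbf{S} = \hat{\mathbf{s}}} = P$ and $P \in \bnmodel(\mathcal{G})[^{\mathbf{S} = \hat{\mathbf{s}}}$.

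I expect the main obstacle to be organisational rather than conceptual: carefully tracking the three overlapping variable blocks $\mathbf{A}, \mathbf{S}, \mathbf{Y}$ and the coincidences among parent sets in $\mathcal{G}$, $\mathcal{G}_1$, and $\mathcal{G}_2$, and checking that every conditional density appearing in the argument is well defined — which is exactly what the hypothesis $p(\mathbf{x} \setminus \mathbf{s}) > 0$ and the positivity of the witnesses' selection-event probabilities provide. The single idea underlying the proof is that, because the selection nodes are sinks, $\mathbf{A}$ $d$-separates $\mathbf{Y}$ from $\mathbf{S}$, so conditioning on $\mathbf{S} = \hat{\mathbf{s}}$ merely reweights the $\mathbf{A}$-margin while leaving the conditional law of $\mathbf{Y}$ given $\mathbf{A}$ intact; the factorisation manipulation above is just the explicit, quantitative form of this observation.
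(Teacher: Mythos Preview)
Your proposal is correct and follows essentially the same route as the paper's proof: in both directions one exploits that the $\mathcal{G}$-factorisation of a witness splits into an $\mathbf{X}$-block and a $\mathbf{Y}$-block (the former giving the $\mathcal{G}_1$-witness for $P_1$ after summing out $\mathbf{Y}$, the latter giving the $\mathcal{G}_2$-factorisation of $P_2$), and for the converse one multiplies a $\mathcal{G}_1$-witness for $P_1$ by $p_2(\mathbf{y}\mid\mathbf{a})$ to obtain a $\mathcal{G}$-witness for $P$. The only cosmetic difference is that the paper constructs its witnesses by specifying the node-conditionals directly (invoking the ``product of specified conditionals lies in the model'' fact), whereas you define the joint and then verify the factorisation; your additional remark about $\mathbf{A}$ $d$-separating $\mathbf{Y}$ from $\mathbf{S}$ is a nice conceptual gloss not present in the paper.
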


\begin{xmp}
\label{xmp:ancestors}
	Let $\mathbf{O} = \{O_1, \ldots, O_6\}$ be a set of categorical variables, $S$ be a variable not in $\mathbf{O}$, $\mathcal{G}$, $\mathcal{G}_1$, and $\mathcal{G}_2$ be the \gls{dag} in Figure \ref{fig:ancestor_theorem_example_dag_1}, \ref{fig:ancestor_theorem_example_dag_2}, and \ref{fig:ancestor_theorem_example_dag_3}, respectively, $P$ be a distribution over $\mathbf{O}$ such that $p(o_1, o_2, o_3, o_4) > 0$, $P_1 = P[_{\{O_5, O_6\}}$, and $P_2 = P[^{\{O_1, O_2, O_3, O_4\}}$. Then Theorem \ref{thm:ancestors} says that
\begin{displaymath}
	P \in \bnmodel(\mathcal{G})[^{S = \hat{s}} \iff P_1 \in \bnmodel(\mathcal{G}_1)[^{S = \hat{s}} \land P_2 \in \bnmodel(\mathcal{G}_2).
\end{displaymath}
Therefore, only $\bnmodel(\mathcal{G}_1)[^{S = \hat{s}}$ needs to be characterised.
\end{xmp}

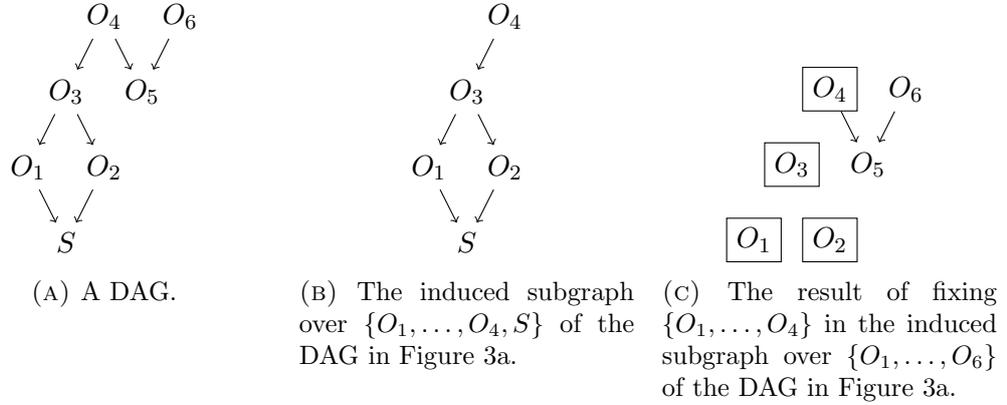
\begin{figure}[htb]
\centering
        \begin{subfigure}[t]{0.3\columnwidth}
		\centering
		\begin{tikzpicture}
			\node (O3) at (0.5, -1) {$O_3$};
			\node (O4) at (1, 0) {$O_4$};
			\node (O6) at (2, 0) {$O_6$};
			\node (O1) at (0, -2) {$O_1$};
			\node (O2) at (1, -2) {$O_2$};
			\node (O5) at (1.5, -1) {$O_5$};
			\node (S) at (0.5, -3) {$S$};
			\draw[->] (O3) -- (O1);
			\draw[->] (O3) -- (O2);
			\draw[->] (O4) -- (O3);
			\draw[->] (O4) -- (O5);
			\draw[->] (O6) -- (O5);
			\draw[->] (O1) -- (S);
			\draw[->] (O2) -- (S);
		\end{tikzpicture}
		\caption{A \gls{dag}.}
		\label{fig:ancestor_theorem_example_dag_1}
        \end{subfigure}
        \;
        \begin{subfigure}[t]{0.3\columnwidth}
		\centering
		\begin{tikzpicture}
			\node (O3) at (0.5, -1) {$O_3$};
			\node (O4) at (1, 0) {$O_4$};
			\node (O1) at (0, -2) {$O_1$};
			\node (O2) at (1, -2) {$O_2$};
			\node (S) at (0.5, -3) {$S$};
			\draw[->] (O3) -- (O1);
			\draw[->] (O3) -- (O2);
			\draw[->] (O4) -- (O3);
			\draw[->] (O1) -- (S);
			\draw[->] (O2) -- (S);
		\end{tikzpicture}
		\caption{The induced subgraph over $\{O_1, \ldots, O_4, S\}$ of the \gls{dag} in Figure \ref{fig:ancestor_theorem_example_dag_1}.}
		\label{fig:ancestor_theorem_example_dag_2}
        \end{subfigure}
        \;
        \begin{subfigure}[t]{0.3\columnwidth}
		\centering
		\begin{tikzpicture}
			\node (O3) at (0.5, -1) [rectangle, draw = black] {$O_3$};
			\node (O4) at (1, 0) [rectangle, draw = black] {$O_4$};
			\node (O6) at (2, 0) {$O_6$};
			\node (O1) at (0, -2) [rectangle, draw = black] {$O_1$};
			\node (O2) at (1, -2) [rectangle, draw = black] {$O_2$};
			\node (O5) at (1.5, -1) {$O_5$};
			\draw[->] (O4) -- (O5);
			\draw[->] (O6) -- (O5);
		\end{tikzpicture}
		\caption{The result of fixing $\{O_1, \ldots, O_4\}$ in the induced subgraph over $\{O_1, \ldots, O_6\}$ of the \gls{dag} in Figure \ref{fig:ancestor_theorem_example_dag_1}.}
		\label{fig:ancestor_theorem_example_dag_3}
        \end{subfigure}
	\caption{The graphs in Example \ref{xmp:ancestors}.}
\end{figure}

In general, different Markov equivalent \glspl{dag} have different ancestors of the selection nodes. When computing constraints in a model under selection, Theorem \ref{thm:ancestors} may be applied to a Markov equivalent \gls{dag} with the fewest ancestors of the selection nodes. As it turns out, such a \gls{dag} contains only the \emph{compelled ancestors} of the selection nodes. Let $\mathbf{G}$ be a Markov equivalence class of \glspl{dag}. $X$ is a \emph{compelled ancestor} of $Y$ in $\mathbf{G}$ if $X \in \anin{\mathcal{G}}{Y}$ for every $\mathcal{G} \in \mathbf{G}$. Clearly, compelled ancestry is a transitive relation. The union of the sets of compelled ancestors of $\mathbf{Y}$ in $\mathbf{G}$ is denoted by $\companin{\mathbf{G}}{\mathbf{Y}}$. Note that, if $\mathbf{G}$ is the Markov equivalence class of an (unknown) causal \gls{dag}, then $\companin{\mathbf{G}}{\mathbf{Y}}$ is the set of \emph{definite causes} of $Y$.

The theorem that follows states that, for a Markov equivalence class of \glspl{dag}, there exists a \gls{dag} in the class such that the ancestors of a set of variables in the \gls{dag} are the compelled ancestors of the variables in the class.

\begin{thm}
\label{thm:compelled_ancestor_dag_edges}
Let $\mathbf{G}$ be a Markov equivalence class of \glspl{dag} over $\mathbf{V}$ and $\mathbf{Y} \subseteq \mathbf{V}$. There exists $\mathcal{G} \in \mathbf{G}$ such that $\anin{\mathcal{G}}{\mathbf{Y}} = \companin{\mathbf{G}}{\mathbf{Y}}$.
\end{thm}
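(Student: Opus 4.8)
The plan is to pass to the CPDAG $\mathcal{C}$ of $\mathbf{G}$ and show that any member of $\mathbf{G}$ in which $\mathbf{A}:=\companin{\mathbf{G}}{\mathbf{Y}}$ is ancestrally closed does the job. Here I use $\mathbf{A}=\bigcap_{\mathcal{G}\in\mathbf{G}}\anin{\mathcal{G}}{\mathbf{Y}}$, so that $\mathbf{Y}\subseteq\mathbf{A}\subseteq\anin{\mathcal{G}}{\mathbf{Y}}$ for every $\mathcal{G}\in\mathbf{G}$. Since $\mathbf{Y}\subseteq\mathbf{A}$, for a fixed $\mathcal{G}$ the identity $\anin{\mathcal{G}}{\mathbf{Y}}=\mathbf{A}$ is equivalent to $\anin{\mathcal{G}}{\mathbf{A}}\subseteq\mathbf{A}$, i.e.\ to: every edge of $\mathcal{G}$ between $\mathbf{A}$ and $\mathbf{V}\setminus\mathbf{A}$ points out of $\mathbf{A}$. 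So it suffices to produce such a $\mathcal{G}\in\mathbf{G}$. For the directed (compelled) edges of $\mathcal{C}$ there is nothing to do: if $W\to X$ is directed in $\mathcal{C}$ and $X\in\mathbf{A}$, then $W\in\anin{\mathcal{G}}{X}\subseteq\anin{\mathcal{G}}{\mathbf{Y}}$ in every $\mathcal{G}\in\mathbf{G}$, hence $W\in\mathbf{A}$; thus every directed $\mathbf{A}$-boundary edge already points out of $\mathbf{A}$, and does so in every member of $\mathbf{G}$. The remaining freedom is entirely in the undirected edges, which lie inside the chain components of $\mathcal{C}$ (the connected components of its undirected part), each inducing a chordal graph.

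Recall that every member of $\mathbf{G}$ arises from $\mathcal{C}$ by orienting each chain component $T$ via a perfect elimination ordering of the chordal graph $\mathcal{C}_u[T]$, directing each edge towards its earlier-eliminated endpoint — which creates no directed cycle and no new collider within $T$ — and that these choices glue to the compelled edges into a consistent extension (the incoming compelled edges create no new collider by Meek's orientation rules). Writing $T_{\mathbf{A}}=T\cap\mathbf{A}$ and $T_{\mathbf{B}}=T\setminus\mathbf{A}$, it therefore suffices to show that for every chain component $T$ there is a perfect elimination ordering of $\mathcal{C}_u[T]$ in which the vertices of $T_{\mathbf{B}}$ come before those of $T_{\mathbf{A}}$: then every $T_{\mathbf{A}}$–$T_{\mathbf{B}}$ edge is directed out of $T_{\mathbf{A}}$, the glued DAG $\mathcal{G}$ has $\mathbf{A}$ ancestrally closed, so $\anin{\mathcal{G}}{\mathbf{Y}}\subseteq\mathbf{A}$, and with $\mathbf{A}\subseteq\anin{\mathcal{G}}{\mathbf{Y}}$ this yields $\anin{\mathcal{G}}{\mathbf{Y}}=\mathbf{A}=\companin{\mathbf{G}}{\mathbf{Y}}$.

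The heart of the argument is that such an ordering exists, and this is where the definition of $\mathbf{A}$ enters. A standard fact about chordal graphs is that $T_{\mathbf{B}}$ forms an initial segment of some perfect elimination ordering of $\mathcal{C}_u[T]$ if and only if there is no chordless path in $\mathcal{C}_u[T]$ joining two vertices of $T_{\mathbf{A}}$ all of whose interior vertices lie in $T_{\mathbf{B}}$ (equivalently, every connected component of $\mathcal{C}_u[T_{\mathbf{B}}]$ has a clique as its neighbourhood in $\mathcal{C}_u[T]$, the neighbourhood lying in $T_{\mathbf{A}}$). I would rule such a path out: suppose $a - v_1 - \cdots - v_m - a'$ is chordless with $a,a'\in T_{\mathbf{A}}$ and $v_1,\dots,v_m\in T_{\mathbf{B}}$. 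In any $\mathcal{G}\in\mathbf{G}$ no $v_i$ can be a collider on this path, since consecutive edges are undirected in $\mathcal{C}$ and a collider there would be a new v-structure; hence the path is directed away from a single ``peak'' vertex on it, so every $v_i$ has a directed sub-path to $a$ or to $a'$, both of which lie in $\mathbf{A}\subseteq\anin{\mathcal{G}}{\mathbf{Y}}$. Therefore $v_i\in\anin{\mathcal{G}}{\mathbf{Y}}$ for every $\mathcal{G}\in\mathbf{G}$, i.e.\ $v_i\in\mathbf{A}$, contradicting $v_i\in T_{\mathbf{B}}$.

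The conceptual core — the peak argument — is short; the steps I expect to need the most care are the two supporting reductions. First, the chordal-graph equivalence between ``$T_{\mathbf{B}}$ is a perfect-elimination initial segment'' and the no-chordless-path / clique-neighbourhood condition; this is classical but should be stated and proved cleanly. Second, the verification that orienting each chain component by such an ordering and retaining the compelled edges really yields a consistent extension of $\mathcal{C}$ — in particular, checking that no new v-structure is created at a chain-component vertex that simultaneously has a compelled parent from outside and an oriented neighbour inside its component, which is where Meek's rules must be invoked. Once these are in place the proof assembles as sketched above.
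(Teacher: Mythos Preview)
Your proposal is correct, but it follows a genuinely different route from the paper's.

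The paper proceeds by a direct merging construction at the level of DAGs, without ever passing to the CPDAG. Its key lemma (Lemma~\ref{lmm:ancestral_set_intersection}) says: given two members $\mathcal{G}_1,\mathcal{G}_2\in\mathbf{G}$ with ancestral sets $\mathbf{A}_1,\mathbf{A}_2$, one can build a third DAG $\mathcal{G}\in\mathbf{G}$ in which $\mathbf{A}_1\cap\mathbf{A}_2$ is ancestral, by taking the edges inside $\mathbf{A}_2$ from $\mathcal{G}_1$ and all other edges from $\mathcal{G}_2$, then checking acyclicity and that no unshielded collider is created or destroyed. Iterating this over the finitely many members of $\mathbf{G}$ produces a DAG in which $\bigcap_i\anin{\mathcal{G}_i}{\mathbf{Y}}=\companin{\mathbf{G}}{\mathbf{Y}}$ is ancestral. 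The whole argument uses only the skeleton-plus-unshielded-colliders characterisation of Markov equivalence; no chordal graph theory, no perfect elimination orderings, no Meek rules.

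Your approach instead works inside the CPDAG: you reduce to finding, in each chordal chain component $T$, a perfect elimination ordering that eliminates $T\setminus\mathbf{A}$ first, and you rule out the obstruction---a chordless $\mathbf{A}$--$\mathbf{A}$ path through $T\setminus\mathbf{A}$---via the peak argument. That peak argument is precisely the reverse direction of the paper's Theorem~\ref{thm:compelled_ancestor}, so in effect you are deriving Theorem~\ref{thm:compelled_ancestor_dag_edges} from (half of) Theorem~\ref{thm:compelled_ancestor}, whereas the paper proves Theorem~\ref{thm:compelled_ancestor_dag_edges} first and independently. What your route buys is constructiveness: it yields an explicit procedure (compute the CPDAG, orient each chain component by the specified PEO) rather than an existence argument by iterated intersection. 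What the paper's route buys is economy: a short splice-and-check, with none of the supporting machinery you correctly flag as needing care---the chordal ``initial segment'' characterisation and the verification that per-component PEO orientations glue to the compelled edges without new v-structures. Both of those are standard and your sketch of them is sound, but note that the paper's proof sidesteps them entirely.
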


The lemma below is useful to obtaining such a \gls{dag}.

\begin{lmm}
\label{lmm:compelled_ancestor_dag_condition}
Let $\mathbf{G}$ be a Markov equivalence class of \glspl{dag} over $\mathbf{V}$ and $\mathcal{G} \in \mathbf{G}$. Let further $\mathbf{Y} \subseteq \mathbf{V}$ and $\mathbf{Z} = \companin{\mathbf{G}}{\mathbf{Y}}$. $\anin{\mathcal{G}}{\mathbf{Y}} = \mathbf{Z}$ if and only if every edge between $Z \in \mathbf{Z}$ and $X \in \mathbf{V} \setminus \mathbf{Z}$ in $\mathcal{G}$ is out of $Z$.
\end{lmm}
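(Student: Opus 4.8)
The plan is to strip off two standing facts that hold for \emph{every} $\mathcal{G} \in \mathbf{G}$ and thereby reduce the biconditional to an elementary statement about the single DAG $\mathcal{G}$. Since ancestry is reflexive, each $Y \in \mathbf{Y}$ is a compelled ancestor of itself, so $\mathbf{Y} \subseteq \mathbf{Z}$; and by the very definition of a compelled ancestor, $\mathbf{Z} = \companin{\mathbf{G}}{\mathbf{Y}} \subseteq \anin{\mathcal{G}}{\mathbf{Y}}$. Hence the equality $\anin{\mathcal{G}}{\mathbf{Y}} = \mathbf{Z}$ is equivalent to the single inclusion $\anin{\mathcal{G}}{\mathbf{Y}} \subseteq \mathbf{Z}$, i.e. to the assertion that no node outside $\mathbf{Z}$ is an ancestor of $\mathbf{Y}$ in $\mathcal{G}$. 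It then suffices to show this inclusion is equivalent to the edge condition, and neither direction will need any further information about the equivalence class.

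For the forward direction I would argue by contradiction: assume $\anin{\mathcal{G}}{\mathbf{Y}} \subseteq \mathbf{Z}$ but that some edge between $Z \in \mathbf{Z}$ and $X \in \mathbf{V} \setminus \mathbf{Z}$ points into $Z$, say $X \to Z$ in $\mathcal{G}$. Then $X$ is a parent, hence an ancestor, of $Z$; combining with $Z \in \mathbf{Z} \subseteq \anin{\mathcal{G}}{\mathbf{Y}}$ and transitivity of ancestry gives $X \in \anin{\mathcal{G}}{\mathbf{Y}} \subseteq \mathbf{Z}$, contradicting $X \notin \mathbf{Z}$. So every edge between $\mathbf{Z}$ and its complement is out of $\mathbf{Z}$.

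For the converse I would trace a directed path across the cut $(\mathbf{Z}, \mathbf{V} \setminus \mathbf{Z})$. Assume every edge between $\mathbf{Z}$ and $\mathbf{V} \setminus \mathbf{Z}$ in $\mathcal{G}$ is out of $\mathbf{Z}$, and let $W \in \anin{\mathcal{G}}{\mathbf{Y}}$ be arbitrary, witnessed by a directed path from $W$ to some $Y \in \mathbf{Y}$. If $W \notin \mathbf{Z}$, then since $Y \in \mathbf{Y} \subseteq \mathbf{Z}$ the path has a first node $B$ lying in $\mathbf{Z}$, and its predecessor $A$ on the path lies in $\mathbf{V} \setminus \mathbf{Z}$; the path edge $A \to B$ is then an edge between $\mathbf{Z}$ and its complement pointing into $\mathbf{Z}$, a contradiction. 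Hence $W \in \mathbf{Z}$, so $\anin{\mathcal{G}}{\mathbf{Y}} \subseteq \mathbf{Z}$, and together with the standing inclusion $\mathbf{Z} \subseteq \anin{\mathcal{G}}{\mathbf{Y}}$ we obtain $\anin{\mathcal{G}}{\mathbf{Y}} = \mathbf{Z}$.

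I do not expect a genuine obstacle here; the argument is short and is essentially bookkeeping. The only points requiring care are invoking the definitions correctly — reflexivity of ancestry for $\mathbf{Y} \subseteq \mathbf{Z}$, and the definition of \emph{compelled ancestor} for $\mathbf{Z} \subseteq \anin{\mathcal{G}}{\mathbf{Y}}$ — and respecting edge orientations when following a directed path from outside $\mathbf{Z}$ into it, where the mild subtlety is the (routine) choice of the first node on the path that belongs to $\mathbf{Z}$.
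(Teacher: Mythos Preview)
Your proof is correct and follows essentially the same approach as the paper's: both directions argue by contradiction via an edge crossing the cut $(\mathbf{Z},\mathbf{V}\setminus\mathbf{Z})$, with the paper being somewhat terser in the reverse direction (it simply asserts that any directed path from $\mathbf{V}\setminus\mathbf{Z}$ to $\mathbf{Y}$ ``must go through $\mathbf{Z}$'' and hence cannot exist, whereas you explicitly pick out the first crossing edge). Your explicit identification of the standing inclusions $\mathbf{Y}\subseteq\mathbf{Z}\subseteq\anin{\mathcal{G}}{\mathbf{Y}}$ is a point the paper leaves implicit but uses.
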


The following theorem gives necessary and sufficient criteria for identifying compelled ancestors from a \gls{cpdag}, thereby eliminating the need to enumerate the \glspl{dag} in the Markov equivalence class and take the intersection of the ancestors in each \gls{dag}.

\begin{thm}
\label{thm:compelled_ancestor}
Let $\mathbf{G}$ be a Markov equivalence class of \glspl{dag} and $\mathcal{P}$ be its \gls{cpdag}. Then $X \in \companin{\mathbf{G}}{\mathbf{Y}}$ if and only if either
\begin{enumerate}
	\item $X \in \anin{\mathcal{P}}{\mathbf{Y}}$, or
	\item $X$ is on an unshielded undirected path between two members of $\anin{\mathcal{P}}{\mathbf{Y}}$ in $\mathcal{P}$.
\end{enumerate}
\end{thm}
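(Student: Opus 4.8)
The plan is to prove the two implications separately. Throughout write $A := \anin{\mathcal{P}}{\mathbf{Y}}$ and let $B$ be the set of nodes for which statement (2) holds. The argument leans on two standard facts about the \gls{cpdag} already recalled in the Background: every directed edge of $\mathcal{P}$ appears with the same orientation in every $\mathcal{G} \in \mathbf{G}$, an undirected edge of $\mathcal{P}$ appears with either orientation in some $\mathcal{G}\in\mathbf{G}$, and no $\mathcal{G}\in\mathbf{G}$ has an unshielded collider that is not already in $\mathcal{P}$.

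For the ``if'' direction, first suppose (1) holds: a directed $\mathcal{P}$-path from $X$ to some $Y\in\mathbf{Y}$ is a directed path in every $\mathcal{G}\in\mathbf{G}$, so $X\in\anin{\mathcal{G}}{\mathbf{Y}}$ for all such $\mathcal{G}$, i.e.\ $X\in\companin{\mathbf{G}}{\mathbf{Y}}$. Now suppose (2) holds, so $X$ lies on an unshielded undirected path $\rho=(Y_1,\ldots,Y_m)$ in $\mathcal{P}$ with $Y_1,Y_m\in A$. Fix any $\mathcal{G}\in\mathbf{G}$; in $\mathcal{G}$ the edges of $\rho$ are oriented and $\rho$ carries no collider, since a collider at an interior $Y_i$ would be an unshielded collider absent from $\mathcal{P}$ (where $Y_{i-1}\undirectededge Y_i\undirectededge Y_{i+1}$). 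A collider-free orientation of a path is monotone on each side of a single ``source'' node (or globally monotone), so in $\mathcal{G}$ every node of $\rho$ — in particular $X$ — is an ancestor of $Y_1$ or of $Y_m$, both of which lie in $A\subseteq\anin{\mathcal{G}}{\mathbf{Y}}$. Hence $X\in\anin{\mathcal{G}}{\mathbf{Y}}$, and since $\mathcal{G}$ was arbitrary, $X\in\companin{\mathbf{G}}{\mathbf{Y}}$.

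For the ``only if'' direction, put $\mathbf{Z}:=\companin{\mathbf{G}}{\mathbf{Y}}$; the previous paragraph gives $A\cup B\subseteq\mathbf{Z}$, so it suffices to prove $\mathbf{Z}\subseteq A\cup B$. By Theorem \ref{thm:compelled_ancestor_dag_edges} fix $\mathcal{G}^*\in\mathbf{G}$ with $\anin{\mathcal{G}^*}{\mathbf{Y}}=\mathbf{Z}$, let $X\in\mathbf{Z}\setminus A$, and take a shortest directed path $\pi=(X=W_0,W_1,\ldots,W_k=Y)$ to $\mathbf{Y}$ in $\mathcal{G}^*$; minimality makes $\pi$ an induced, hence unshielded, path in the common skeleton of $\mathcal{G}^*$ and $\mathcal{P}$. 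Using the no-new-unshielded-collider property one shows that whenever an edge of $\pi$ is directed in $\mathcal{P}$ the next one is as well (orienting an undirected successor edge against $\pi$ would create an unshielded collider), so the $\mathcal{P}$-undirected edges of $\pi$ form an initial segment $W_0\undirectededge\cdots\undirectededge W_t$ followed by a directed $\mathcal{P}$-path to $Y$; in particular $W_t\in A$. If $t=0$ then $\pi$ is directed in $\mathcal{P}$ and $X\in A$, a contradiction, so $t\ge 1$ and $(W_0,\ldots,W_t)$ is an unshielded undirected $\mathcal{P}$-path from $X$ into $A$. Because its first edge $W_0\undirectededge W_1$ is undirected in $\mathcal{P}$, some $\mathcal{G}'\in\mathbf{G}$ orients it as $W_1\to W_0$; running the same argument in $\mathcal{G}'$ (where $X$ is again an ancestor of $\mathbf{Y}$) yields a second unshielded undirected $\mathcal{P}$-path from $X$ into $A$ whose first edge differs from $W_0\undirectededge W_1$. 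Concatenating the two paths at $X$ and pruning the resulting walk to an induced subpath that still passes through $X$ and still has both endpoints in $A$ exhibits (2) for $X$.

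The main obstacle is exactly this pruning step: the concatenation of two unshielded undirected paths need be neither simple nor unshielded, so one must exploit chordality of the chain component of $\mathcal{P}$ containing $X$ (the undirected subgraph of a \gls{cpdag} is chordal) together with a minimality choice of the two paths to extract a genuine witness for (2). An alternative route avoiding the gluing is to build a single $\mathcal{G}_0\in\mathbf{G}$ with $\anin{\mathcal{G}_0}{\mathbf{Y}}=A\cup B$ by orienting each chain component of $\mathcal{P}$ so that exactly the nodes of $A\cup B$ in that component reach $\mathbf{Y}$; since $\companin{\mathbf{G}}{\mathbf{Y}}=\bigcap_{\mathcal{G}\in\mathbf{G}}\anin{\mathcal{G}}{\mathbf{Y}}$ this forces $\mathbf{Z}\subseteq A\cup B$ at once (and Lemma \ref{lmm:compelled_ancestor_dag_condition} helps certify $\mathcal{G}_0$), but the delicate point then becomes showing such an orientation exists — acyclic, introducing no new unshielded collider, and compatible with the directed edges of $\mathcal{P}$ incident to the component. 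Everything else is routine bookkeeping with the two \gls{cpdag} facts above.
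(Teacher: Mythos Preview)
Your ``if'' direction is essentially the paper's: a directed $\mathcal{P}$-path is directed in every $\mathcal{G}$, and on an unshielded undirected path no $\mathcal{G}$ can create a collider, so the path is monotone from some source and every node on it reaches one of the two $A$-endpoints. For the ``only if'' direction your initial-segment analysis (an unshielded possibly directed path in $\mathcal{P}$ is an undirected prefix followed by a directed suffix into $A$) is also what the paper proves as a corollary of the rule ``$X\to Y\undirectededge Z$ forces $X\to Z$''.

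The genuine gap is the one you flag. Flipping the first edge in some $\mathcal{G}'$ gives you a second unshielded undirected $\mathcal{P}$-path from $X$ into $A$ whose first edge is \emph{different}, but not necessarily \emph{non-adjacent} to $W_1$. If $Z_1$ and $Z_2$ are adjacent, the triple $(Z_1,X,Z_2)$ is shielded and any induced subpath of the concatenation between the two $A$-endpoints may bypass $X$ entirely via the chord $Z_1\undirectededge Z_2$; chordality and minimality alone do not obviously rescue this. The paper closes the gap by a different mechanism: it first argues that the set of \emph{all} possible first neighbours $Z$ of $X$ along unshielded possibly directed paths into $\mathbf{Y}$ cannot form a clique with $X$. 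If it did, pick a maximal clique $\mathbf{M}\supseteq\{X\}\cup\mathbf{Z}$ in the (chordal) undirected part of $\mathcal{P}$, take a join tree rooted at $\mathbf{M}$, and extend the induced partial order so that every $Z\in\mathbf{Z}$ precedes $X$; this yields a $\mathcal{G}\in\mathbf{G}$ in which every such edge is $Z\to X$, so $X$ has no directed path to $\mathbf{Y}$, contradicting $X\in\companin{\mathbf{G}}{\mathbf{Y}}$. Hence two first neighbours $Z_1,Z_2$ are non-adjacent, the concatenation $s$ of the two undirected prefixes is unshielded at $X$ as well, and the paper then shows $s$ is \emph{already simple}: any simple cycle occurring as a subwalk would, by the chordality lemma ``every simple cycle in a chordal graph contains at least two shielded triples'', contribute two shielded triples, whereas all triples along $s$ are unshielded and only the single wrap-around triple of the cycle is new. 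No pruning is needed. Your appeal to Theorem~\ref{thm:compelled_ancestor_dag_edges} is not circular, but it does not buy you the non-adjacency that makes the concatenation work; the join-tree/clique step is the missing idea.
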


Algorithm \ref{alg:find_compelled_ancestors} can be used for the identification of compelled ancestors based on Theorem \ref{thm:compelled_ancestor}. The algorithm first identifies $\mathbf{A} = \anin{\mathcal{P}}{\mathbf{Y}}$ and then recursively finds the interior nodes $\mathbf{U}$ on the unshielded undirected paths from each member $X$ of $\mathbf{A}$ to $\mathbf{A} \setminus \{X\}$ with no interior node in $\mathbf{A}$ in $\mathcal{P}$. The second part of the algorithm is based on the lemma below.

\begin{lmm}
\label{lmm:u_up_subgraph_is_tree}
Let $\mathcal{P}$ be a \gls{cpdag} over $\mathbf{V}$ and $X \in \mathbf{V}$. The subgraph of $\mathcal{P}$ containing only the nodes and the edges on unshielded undirected paths from $X$ to $\mathbf{V} \setminus \{X\}$ in $\mathcal{P}$ is a tree.
\end{lmm}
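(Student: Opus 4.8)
The plan is to show the described subgraph $\mathcal{T}$ is connected and acyclic. Connectedness is immediate: every node of $\mathcal{T}$ lies, by construction, on some unshielded undirected path starting at $X$, so it is reachable from $X$ within $\mathcal{T}$; hence $\mathcal{T}$ is connected. Every edge of $\mathcal{T}$ is undirected (it lies on an undirected path), so $\mathcal{T}$ is an undirected connected graph, and it remains to prove $\mathcal{T}$ has no (simple) cycle.

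Suppose for contradiction that $\mathcal{T}$ contains a simple cycle $(Y_1, \ldots, Y_k, Y_1)$ with $k \ge 3$, all edges undirected. The first key step is to rule out the case $X \notin \{Y_1, \ldots, Y_k\}$. Each $Y_i$ lies on an unshielded undirected path from $X$; take a shortest undirected path within $\mathcal{T}$ from $X$ to the cycle, say reaching the cycle first at $Y_j$. Then going around the cycle one way versus the other from $Y_j$ gives, after concatenation with the $X$-to-$Y_j$ path, two distinct undirected paths from $X$; I would argue that at least one of them — suitably shortcut to be simple — is an undirected path from $X$ whose induced subgraph still contains the chord-free cycle, and then invoke the structure of \glspl{cpdag} to derive a contradiction. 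The cleanest route is: the undirected part of a \gls{cpdag} is a union of chordal graphs (indeed each chain component of a \gls{cpdag} is chordal, a standard fact about \glspl{cpdag}), so any cycle of length $\ge 4$ in the undirected subgraph has a chord; and a $3$-cycle is shielded. Either way the cycle is shielded, meaning some $Y_iY_j$ with $|i-j| \neq 1 \bmod k$ are adjacent in $\mathcal{P}$ — but every edge of the cycle lies on an \emph{unshielded} undirected path from $X$, and I must show the chord already contradicts unshieldedness of one of those paths. Here is where I would use that a triple on the cycle together with a chord endpoint forms a shielded triple on whichever unshielded path passes through it.

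So the main obstacle is the bookkeeping in the previous step: reconciling "every edge of $\mathcal{T}$ lies on \emph{some} unshielded undirected $X$-path" with "the undirected \gls{cpdag}-subgraph is chordal." The resolution I expect to work: pick any edge $Y_iY_{i+1}$ of the cycle; it lies on an unshielded undirected path $q$ from $X$. If $k=3$, the third vertex $Y_{i+2}$ is adjacent to both $Y_i$ and $Y_{i+1}$ via undirected edges (cycle edges), and at least one of $Y_i,Y_{i+1}$ is an interior node of $q$ (or $X$ itself); its neighbours on $q$ together with $Y_{i+2}$ — using that $\mathcal{P}$ has no new unshielded colliders and the chordality of chain components — produce a shielded triple on $q$, contradiction. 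If $k \ge 4$, chordality of the chain component supplies a chord, reducing to a shorter cycle in $\mathcal{T}$; I would set this up as an induction on $k$, with the $k=3$ case as the base. The remaining case $X \in \{Y_1,\ldots,Y_k\}$, say $X = Y_1$: then $(X, Y_2, Y_3)$ and $(X, Y_k, Y_{k-1})$ are triples; unshieldedness forces $Y_2 Y_k$ non-adjacent, but then $(Y_2, X, Y_k)$ would be an unshielded triple with two undirected edges into $X$, i.e. an undirected "would-be collider" at $X$ — and in a \gls{cpdag} an unshielded triple with both edges undirected cannot exist if orienting it as a collider is forbidden; more carefully, chordality of the chain component containing $X$ again forces a chord in the cycle, contradicting simplicity after shortcutting. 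Once acyclicity is established, $\mathcal{T}$ is a connected acyclic undirected graph, i.e. a tree, completing the proof.
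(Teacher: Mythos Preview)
Your overall strategy---connectedness is immediate, then assume a simple cycle in $\mathcal{T}$ and invoke chordality of the undirected part of $\mathcal{P}$ to locate shielded triples---is exactly the route the paper takes: its entire acyclicity argument is ``Suppose that there is a simple cycle $c$ in $\mathcal{T}$. Owing to Lemma~\ref{lmm:chordal_graph_simple_cycle_unshielded_triple}, there exist two shielded triples on $c$, which is a contradiction.'' You have correctly put your finger on the ``main obstacle'' that the paper glosses over: those triples are shielded \emph{in the chordal undirected part of $\mathcal{P}$}, not in $\mathcal{T}$, and nothing forces such a triple to sit on any single unshielded $X$-path. Your proposed fixes do not close this gap. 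In the inductive step for $k\ge 4$ you say a chord ``reduc[es] to a shorter cycle in $\mathcal{T}$'', but the chord lies in $\mathcal{P}$ and need not lie in $\mathcal{T}$, so the shorter cycle need not be a cycle of $\mathcal{T}$ at all. The $k=3$ base case is likewise unsupported: the third vertex being adjacent to both endpoints of an edge of some unshielded path $q$ does not make any triple \emph{on $q$} shielded.

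In fact neither your argument nor the paper's can be completed, because the statement is false as written. Take $\mathcal{P}$ to be the purely undirected \gls{cpdag} on $\{X,A,B,D\}$ with edges $X\undirectededge A$, $X\undirectededge D$, $A\undirectededge D$, $A\undirectededge B$, $B\undirectededge D$; this is chordal (the $4$-cycle $X\undirectededge A\undirectededge B\undirectededge D\undirectededge X$ has chord $A\undirectededge D$), hence a valid \gls{cpdag}. The unshielded undirected paths from $X$ are $(X,A)$, $(X,D)$, $(X,A,B)$ and $(X,D,B)$; every longer simple path contains one of the shielded triples $(X,A,D)$, $(X,D,A)$, $(A,B,D)$, $(D,B,A)$. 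Thus $\mathcal{T}$ has exactly the edges $X\undirectededge A$, $X\undirectededge D$, $A\undirectededge B$, $B\undirectededge D$, which form the $4$-cycle $X\undirectededge A\undirectededge B\undirectededge D\undirectededge X$; so $\mathcal{T}$ is not a tree. The two shielded triples that Lemma~\ref{lmm:chordal_graph_simple_cycle_unshielded_triple} locates on this cycle are $(D,X,A)$ and $(A,B,D)$, and neither is a triple on any unshielded undirected path from $X$---which is precisely why no contradiction arises, and why your induction cannot get off the ground.
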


For each member $X$ of $\mathbf{A}$, function call $\textsc{find\_uup\_interior\_nodes}(\mathcal{U}, X, \mathbf{A})$ at line \ref{line:find_uup_interior_nodes_call} of Algorithm \ref{alg:find_compelled_ancestors} implicitly performs a \emph{\gls{dfs}} in the tree of unshielded undirected paths from $X$ to $\mathbf{V} \setminus \{X\}$ in $\mathcal{P}$, starting at $X$. When either a member of $\mathbf{A}$ or a leaf in the tree is encountered, the algorithm backtracks. When at node $Y$ during backtracking, if a member of $\mathbf{A}$ was encountered beyond $Y$ (as indicated by the boolean flag $found$), $Y$ is added to $\mathbf{U}$ (unless $Y \in \mathbf{A}$). Since identifying $\mathbf{A}$ is $O(|\mathbf{V}|)$, each \gls{dfs} is $O(|\mathbf{V}|)$, and a \gls{dfs} is performed for each member of $\mathbf{A}$, Algorithm \ref{alg:find_compelled_ancestors} is $O(|\mathbf{V}|^2)$.

\begin{algorithm}
\caption{Find compelled ancestors. $\mathcal{P}$ is a \gls{cpdag} over $\mathbf{V}$ and $\mathbf{Y} \subseteq \mathbf{V}$. In the output, $\mathbf{CA} = \companin{\mathbf{G}}{\mathbf{Y}}$, where $\mathbf{G}$ is the Markov equivalence class of \glspl{dag} represented by $\mathcal{P}$. The \textsc{uup} in function names stands for unshielded undirected path.}
\label{alg:find_compelled_ancestors}
\begin{algorithmic}[1]
\Require $\mathcal{P}, \mathbf{Y}$
\Ensure $\mathbf{CA}$
\State{$\mathbf{A} \gets \anin{\mathcal{P}}{\mathbf{Y}}$}
\State{Let $\mathcal{U}$ be the subgraph of $\mathcal{P}$ containing only the undirected edges of $\mathcal{P}$}
\State{$\mathbf{U} \gets \emptyset$}
\ForAll{$X \in \mathbf{A}$}
	\State{$\mathbf{U}_X \gets \Call{find\_uup\_interior\_nodes}{\mathcal{U}, X, \mathbf{A}}$} \label{line:find_uup_interior_nodes_call}
	\State{$\mathbf{U} \gets \mathbf{U} \cup \mathbf{U}_X$}
\EndFor
\State{$\mathbf{CA} \gets \mathbf{A} \cup \mathbf{U}$}
\Function{find\_uup\_interior\_nodes}{$\mathcal{U}, X, \mathbf{A}$}
	\State{$\mathbf{U} \gets \emptyset$}
	\ForAll{$Y \in \adjin{\mathcal{U}}{X}$}
		\State{$(found_Y, \mathbf{U}_Y)  \gets \Call{find\_uup\_interior\_nodes\_recursive}{\mathcal{U}, X, Y, \mathbf{A}}$}
		\State{$\mathbf{U} \gets \mathbf{U} \cup \mathbf{U}_Y$}
	\EndFor
	\State{\Return $\mathbf{U}$}
\EndFunction
\Function{find\_uup\_interior\_nodes\_recursive}{$\mathcal{U},  X, Y, \mathbf{A}$}
	\State{$found \gets false$}
	\State{$\mathbf{U} \gets \emptyset$}
	\ForAll{$Z \in \adjin{\mathcal{U}}{Y} \setminus (\adjin{\mathcal{U}}{X} \cup \{X\})$}
		\If{$Z \in \mathbf{A}$}
			\State{$found_Z \gets true$}
			\State{$\mathbf{U}_Z \gets \emptyset$}
		\Else
			\State{$(found_Z, \mathbf{U}_Z) \gets \Call{find\_uup\_interior\_nodes\_recursive}{\mathcal{U}, Y, Z, \mathbf{A}}$}
		\EndIf
		\State{$\mathbf{U} \gets \mathbf{U} \cup \mathbf{U}_Z$}
		\State{$found \gets found \lor found_Z$}
	\EndFor
	\If{$found$}
		\State{$\mathbf{U} \gets \mathbf{U} \cup \{Y\}$}
	\EndIf
	\State{\Return ($found, \mathbf{U})$}
\EndFunction
\end{algorithmic}
\end{algorithm}

Let $\mathcal{G}$ be a \gls{dag} over $\mathbf{V}$, $\mathbf{G}$ be the Markov equivalence class of $\mathcal{G}$, $\mathbf{Y} \subseteq \mathbf{V}$, and $\mathbf{Z} = \companin{\mathbf{G}}{\mathbf{Y}}$. A \gls{dag} $\mathcal{G}'$ in $\mathbf{G}$ such that $\anin{\mathcal{G}'}{\mathbf{Y}} = \mathbf{Z}$ can be obtained as follows. First, obtain the \gls{cpdag} $\mathcal{P}$ of $\mathbf{G}$ \citep[using, e.g., the algorithm of][]{chickering1995transformational}. Then identify $\mathbf{Z}$ from $\mathcal{P}$ using Algorithm \ref{alg:find_compelled_ancestors}. Finally, use Algorithm 10.3 in \citet{neapolitan2004learning}, orienting edges between $Z \in \mathbf{Z}$ and $X \in \mathbf{V} \setminus \mathbf{Z}$ out of $Z$ in $\mathcal{P}$ in Step 1 of the algorithm, to orient edges in $\mathcal{P}$.

\begin{xmp}
\label{xmp:fewest_ancestors}
Let $G$ be the \gls{dag} in Figure \ref{fig:ancestor_theorem_example_dag_1} and $\mathbf{G}$ be the Markov equivalence class where $G$ belongs. Figure \ref{fig:ancestor_colliders_corollary_example_cpdag} shows the \gls{cpdag} of $G$. According to Theorem \ref{thm:compelled_ancestor},
\begin{displaymath}
	\companin{\mathbf{G}}{S} = \{O_1, O_2, O_3\}.
\end{displaymath}
Owing to Theorem \ref{thm:compelled_ancestor_dag_edges}, there exists $\mathcal{G}' \in \mathbf{G}$ such that $\anin{\mathcal{G}'}{S} = \{O_1, O_2, O_3\}$. Such $G'$ is the \gls{dag} in Figure \ref{fig:ancestor_colliders_corollary_example_dag_1}. Let $\mathcal{G}'_1$ and $\mathcal{G}'_2$ be the \gls{dag} in Figures \ref{fig:ancestor_colliders_corollary_example_dag_2} and \ref{fig:ancestor_colliders_corollary_example_dag_3}, respectively, $P$ be a distribution over $\mathbf{O}$ such that $p(o_1, o_2, o_3) > 0$, $P'_1 = P[_{\{O_4, O_4, O_5, O_6\}}$, and $P'_2 = P[^{\{O_1, O_2, O_3\}}$. Then Theorem \ref{thm:ancestors} implies that
\begin{displaymath}
	P \in \bnmodel(\mathcal{G})[^{S = \hat{s}} = \bnmodel(\mathcal{G}')[^{S = \hat{s}}\iff P_1 \in \bnmodel(\mathcal{G}_1')[^{S = \hat{s}} \land P_2 \in \bnmodel(\mathcal{G}_2'). 
\end{displaymath}
Therefore, only $\bnmodel(\mathcal{G}_1')[^{S = \hat{s}}$, which has fewer variables than $\bnmodel(\mathcal{G}_1)[^{S = \hat{s}}$ in Example \ref{xmp:ancestors}, needs to be characterised.
\end{xmp}

\begin{figure}[htb]
\centering
        \begin{subfigure}[t]{0.45\columnwidth}
		\centering
		\begin{tikzpicture}
			\node (O3) at (0.5, -1) {$O_3$};
			\node (O4) at (1, 0) {$O_4$};
			\node (O6) at (2, 0) {$O_6$};
			\node (O1) at (0, -2) {$O_1$};
			\node (O2) at (1, -2) {$O_2$};
			\node (O5) at (1.5, -1) {$O_5$};
			\node (S) at (0.5, -3) {$S$};
			\draw[-] (O3) -- (O1);
			\draw[-] (O3) -- (O2);
			\draw[-] (O4) -- (O3);
			\draw[->] (O4) -- (O5);
			\draw[->] (O6) -- (O5);
			\draw[->] (O1) -- (S);
			\draw[->] (O2) -- (S);
		\end{tikzpicture}
		\caption{The \gls{cpdag} of the Markov equivalence class where the \gls{dag} in Figure \ref{fig:ancestor_theorem_example_dag_1} belongs. $O_1$, $O_2$, and $O_3$ are the compelled ancestors of $S$ in the class.}
		\label{fig:ancestor_colliders_corollary_example_cpdag}
        \end{subfigure}
        \quad
        \begin{subfigure}[t]{0.45\columnwidth}
		\centering
		\begin{tikzpicture}
			\node (O3) at (0.5, -1) {$O_3$};
			\node (O4) at (1, 0) {$O_4$};
			\node (O6) at (2, 0) {$O_6$};
			\node (O1) at (0, -2) {$O_1$};
			\node (O2) at (1, -2) {$O_2$};
			\node (O5) at (1.5, -1) {$O_5$};
			\node (S) at (0.5, -3) {$S$};
			\draw[->] (O3) -- (O1);
			\draw[->] (O3) -- (O2);
			\draw[->] (O3) -- (O4);
			\draw[->] (O4) -- (O5);
			\draw[->] (O6) -- (O5);
			\draw[->] (O1) -- (S);
			\draw[->] (O2) -- (S);
		\end{tikzpicture}
		\caption{A \gls{dag} in the Markov equivalence class represented by the \gls{cpdag} in Figure \ref{fig:ancestor_colliders_corollary_example_cpdag} such that the only ancestors of $S$ are the compelled ancestors of $S$ in the class.}
		\label{fig:ancestor_colliders_corollary_example_dag_1}
        \end{subfigure}
        \par \bigskip
        \begin{subfigure}[t]{0.45\columnwidth}
		\centering
		\begin{tikzpicture}
			\node (O3) at (0.5, -1) {$O_3$};
			\node (O1) at (0, -2) {$O_1$};
			\node (O2) at (1, -2) {$O_2$};
			\node (S) at (0.5, -3) {$S$};
			\draw[->] (O3) -- (O1);
			\draw[->] (O3) -- (O2);
			\draw[->] (O1) -- (S);
			\draw[->] (O2) -- (S);
		\end{tikzpicture}
		\caption{The induced subgraph over $\{O_1, O_2, O_3, S\}$ of the \gls{dag} in Figure \ref{fig:ancestor_colliders_corollary_example_dag_1}.}
		\label{fig:ancestor_colliders_corollary_example_dag_2}
        \end{subfigure}
        \quad
        \begin{subfigure}[t]{0.45\columnwidth}
		\centering
		\begin{tikzpicture}
			\node (O3) at (0.5, -1) [rectangle, draw = black] {$O_3$};
			\node (O4) at (1, 0) {$O_4$};
			\node (O6) at (2, 0) {$O_6$};
			\node (O1) at (0, -2) [rectangle, draw = black] {$O_1$};
			\node (O2) at (1, -2) [rectangle, draw = black] {$O_2$};
			\node (O5) at (1.5, -1) {$O_5$};
			\draw[->] (O3) -- (O4);
			\draw[->] (O4) -- (O5);
			\draw[->] (O6) -- (O5);
		\end{tikzpicture}
		\caption{The result of fixing $\{O_1, O_2, O_3)\}$ in the induced subgraph over $\{O_1, \ldots, O_6\}$ of the \gls{dag} in Figure \ref{fig:ancestor_colliders_corollary_example_dag_1}.}
		\label{fig:ancestor_colliders_corollary_example_dag_3}
        \end{subfigure}
	\caption{The graphs in Example \ref{xmp:fewest_ancestors}.}
\end{figure}
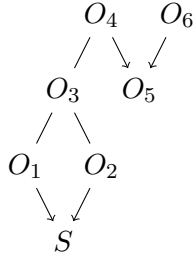
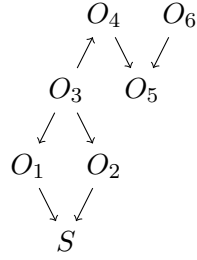
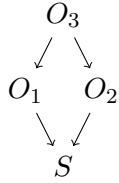
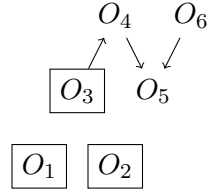

Finally, the following theorem states that, in the case of a single selection node, selection reduces to conditioning on the parents of the node in the induced subgraph over the observed nodes. 

\begin{thm}
\label{thm:selection_conditioning}
Let $\mathbf{O} \dot{\cup} S$ be a set of categorical variables, $\mathcal{G} = (\mathbf{O} \cup S, \mathbf{E})$ such that $\chin{\mathcal{G}}{S} = \emptyset$, and $P$ be a distribution over $\mathbf{O}$ such that $p(\painvalues{\mathcal{G}}{S}) > 0$. Then
	\begin{displaymath}
		P \in \bnmodel(\mathcal{G})[^{S = \hat{s}} \iff P[^{\pain{\mathcal{G}}{S}} \in \bnmodel(\mathcal{G}_{\mathbf{O}})[^{\pain{\mathcal{G}}{S}}
	\end{displaymath}
\end{thm}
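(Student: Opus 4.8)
Write $\mathbf{A} := \pain{\mathcal{G}}{S}$ and $\mathbf{B} := \mathbf{O} \setminus \mathbf{A}$. Since $\chin{\mathcal{G}}{S} = \emptyset$, the node $S$ is a sink of $\mathcal{G}$, so $\dein{\mathcal{G}}{S} = \{S\}$ and $\pain{\mathcal{G}}{X} = \pain{\mathcal{G}_{\mathbf{O}}}{X}$ for every $X \in \mathbf{O}$. The engine of the whole argument is the single conditional independence $\indin{\tilde{P}}{S}{\mathbf{B}}{\mathbf{A}}$, which is precisely the Markov condition for $S$ in any $\tilde{P} \in \bnmodel(\mathcal{G})$: conditioning such a $\tilde{P}$ on $S = \hat{s}$ reweights the marginal of $\mathbf{A}$ but leaves the conditional distribution of $\mathbf{B}$ given $\mathbf{A}$ unchanged.

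For the forward implication I would take $\tilde{P} \in \bnmodel(\mathcal{G})$ with $\tilde{P}[^{S = \hat{s}} = P$. Existence of this conditional forces $\tilde{p}(\hat{s}) > 0$; together with $p(\painvalues{\mathcal{G}}{S}) > 0$ this gives $\tilde{p}(\mathbf{a}) \ge \tilde{p}(\mathbf{a}, \hat{s}) = p(\mathbf{a})\,\tilde{p}(\hat{s}) > 0$ for every value $\mathbf{a}$ of $\mathbf{A}$, so all conditionals below are well-defined. Let $r := \tilde{P}[_S$ be the marginal of $\tilde{P}$ over $\mathbf{O}$. Then $r(\mathbf{o}) = \sum_s \tilde{p}(\mathbf{o}, s) = \prod_{X \in \mathbf{O}} \tilde{p}(x \mid \painvalues{\mathcal{G}}{X})$, which, as $\painvalues{\mathcal{G}}{X} = \painvalues{\mathcal{G}_{\mathbf{O}}}{X}$ for $X \in \mathbf{O}$, is a product of conditionals along $\mathcal{G}_{\mathbf{O}}$; hence $r \in \bnmodel(\mathcal{G}_{\mathbf{O}})$ by the converse factorisation result recalled above. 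Finally, $p(\mathbf{b} \mid \mathbf{a}) = \tilde{p}(\mathbf{b} \mid \mathbf{a}, \hat{s}) = \tilde{p}(\mathbf{b} \mid \mathbf{a}) = r(\mathbf{b} \mid \mathbf{a})$ — the middle equality being $\indin{\tilde{P}}{S}{\mathbf{B}}{\mathbf{A}}$ — so $P[^{\mathbf{A}} = r[^{\mathbf{A}} \in \bnmodel(\mathcal{G}_{\mathbf{O}})[^{\mathbf{A}}$.

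For the reverse implication, let $r \in \bnmodel(\mathcal{G}_{\mathbf{O}})$ witness $P[^{\mathbf{A}} \in \bnmodel(\mathcal{G}_{\mathbf{O}})[^{\mathbf{A}}$, so that $r(\mathbf{b} \mid \mathbf{a}) = p(\mathbf{b} \mid \mathbf{a})$ for all values $\mathbf{a}, \mathbf{b}$ of $\mathbf{A}, \mathbf{B}$, and $r(\mathbf{a}) > 0$ for every $\mathbf{a}$ (otherwise $r[^{\mathbf{A}}$ is not a conditional distribution). I would build $\tilde{P} \in \bnmodel(\mathcal{G})$ with $\tilde{P}[^{S = \hat{s}} = P$ by specifying its defining conditionals as follows: for $X \in \mathbf{O}$, let the conditional of $X$ given its $\mathcal{G}$-parents be that of $r$ (completed arbitrarily on zero-$r$-mass parent configurations, which leaves the resulting $\mathbf{O}$-marginal equal to $r$); and let $\tilde{p}(\hat{s} \mid \mathbf{a}) := c\, p(\mathbf{a})/r(\mathbf{a})$ with $c := \min_{\mathbf{a}} r(\mathbf{a})/p(\mathbf{a}) \in (0,1]$, placing the leftover mass $1 - \tilde{p}(\hat{s} \mid \mathbf{a}) \ge 0$ on the other states of $S$ arbitrarily. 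Then $\tilde{P} \in \bnmodel(\mathcal{G})$ by the converse factorisation result, $\tilde{P}[_S = r$, and $\tilde{p}(\hat{s}) = \sum_{\mathbf{a}} r(\mathbf{a})\,\tilde{p}(\hat{s} \mid \mathbf{a}) = c$; hence, for every value $\mathbf{o}$ of $\mathbf{O}$ with components $\mathbf{a}$ on $\mathbf{A}$ and $\mathbf{b}$ on $\mathbf{B}$, $\tilde{p}(\mathbf{o} \mid \hat{s}) = r(\mathbf{o})\,\tilde{p}(\hat{s} \mid \mathbf{a})/c = r(\mathbf{o})\,p(\mathbf{a})/r(\mathbf{a}) = r(\mathbf{b} \mid \mathbf{a})\,p(\mathbf{a}) = p(\mathbf{b} \mid \mathbf{a})\,p(\mathbf{a}) = p(\mathbf{o})$, i.e.\ $\tilde{P}[^{S = \hat{s}} = P$, so $P \in \bnmodel(\mathcal{G})[^{S = \hat{s}}$.

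The forward implication is essentially bookkeeping around the single independence $\ind{S}{\mathbf{B}}{\mathbf{A}}$ and the standard fact that marginalising a sink out of a \gls{bn} model yields the \gls{bn} model of the induced subgraph, and I anticipate no real difficulty there. The main obstacle is the reverse implication — in particular, engineering the reweighting $\tilde{p}(\hat{s} \mid \cdot)$. It must be proportional to $p(\mathbf{a})/r(\mathbf{a})$ so that conditioning on $S = \hat{s}$ converts the $\mathbf{A}$-marginal of $r$ into that of $P$ while preserving $\mathbf{B}$ given $\mathbf{A}$, yet it must also be an honest conditional probability; this is exactly why the normalisation $c$ and the hypothesis $p(\painvalues{\mathcal{G}}{S}) > 0$ (through which $r(\mathbf{a}) > 0$) are indispensable, and why the construction tacitly uses that $S$ has a state other than $\hat{s}$ — as a nondegenerate categorical variable does. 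A further, purely bookkeeping point is to keep the conventions on when a conditional distribution is ``defined'' consistent throughout, since these underlie both the positivity of the witness's $\mathbf{A}$-marginal and the literal reading of the displayed chain of equalities.
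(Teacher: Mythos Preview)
Your proposal is correct and follows essentially the same approach as the paper's proof: in the forward direction both marginalise out the sink $S$ to obtain a witness in $\bnmodel(\mathcal{G}_{\mathbf{O}})$ (you invoke the Markov condition $\ind{S}{\mathbf{B}}{\mathbf{A}}$ directly, while the paper carries out the equivalent algebraic cancellation of $q(\hat{s}\mid\painvalues{\mathcal{G}}{S})$); in the reverse direction both construct the witness via the reweighting $\tilde{p}(\hat{s}\mid\mathbf{a}) = c\cdot p(\mathbf{a})/r(\mathbf{a})$ with the identical normalising constant $c = \min_{\mathbf{a}} r(\mathbf{a})/p(\mathbf{a}) = 1/\max_{\mathbf{a}} p(\mathbf{a})/r(\mathbf{a})$. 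Your version is slightly more explicit about the positivity bookkeeping and the need for $S$ to have a state other than $\hat{s}$, but the argument is the same.
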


We conclude this section with an example of a non-\gls{ci}, non-factorisation constraint in a categorical \gls{bn} model under selection.

\begin{xmp}
\label{xmp:constraint}
Let $\mathcal{G}$ be the \gls{dag} in Figure \ref{fig:case_3} and $\mathbf{O} = \{O_1, O_2, O_3, O_4\}$ be a set of variables with domain $\{1, 2\}$. Owing to Theorem \ref{thm:selection_conditioning}, $P \in \bnmodel(\mathcal{G})[^{S = \hat{s}} \iff P[^{O_4} \in \bnmodel(\mathcal{G}_{\mathbf{O}})[^{O_4}$. Therefore, only $\bnmodel(\mathcal{G}_{\mathbf{O}})[^{O_4}$ needs to be characterised. $\bnmodel(\mathcal{G}_{\mathbf{O}})$ is the set of distributions of $\mathbf{O}$ for which $\ind{O_1}{O_3}{O_2}$ holds. Thus, apart from being nonnegative and sum to one, the values of $p$ must satisfy the following system of polynomial equations:
\scriptsize
\begin{align}
&(p_{1,1,1,1} + p_{1,1,1,2})(p_{2,1,2,1} - p_{2,1,2,2}) - (p_{1,1,2,1} + p_{1,1,2,2})(p_{2,1,1,1} + p_{2,1,1,2}) = 0
\end{align}
\begin{align}
&(p_{1,2,1,1} + p_{1,2,1,2})(p_{2,2,2,1} - p_{2,2,2,2}) - (p_{1,2,2,1} + p_{1,2,2,2})(p_{2,2,1,1} + p_{2,2,1,2}) = 0
\end{align}
\normalsize
where $p_{i,j,k,l}$ stands for $p(O_1 = i, O_2 = j, O_3 = k, O_4 = l)$. Suppose that $Q$ is a distribution over $\mathbf{O}$. In order for $Q$ to be in $\bnmodel(\mathcal{G}_{\mathbf{O}})[^{O_4}$, there must exist a distribution $R$ of $O_4$ such that the $Q \cdot R \in \bnmodel(\mathcal{G}_{\mathbf{O}})$. Let  $q_{i,j,k,l}$ stand for $q(O_1 = i, O_2 = j, O_3 = k \mid O_4 = l)$ and $r_i$ stand for $r(O_4 = i)$. Replacing $p_{i,j,k,l}$ with $q_{i,j,k,l}r_l$ in the system above and adding the equation $r_1 + r_2 = 1$ results in the following system:
\scriptsize
\begin{align}
&(q_{1,1,1,1}r_1 + q_{1,1,1,2}r_2) (q_{2,1,2,1}r_1 - q_{2,1,2,2}r_2) - (q_{1,1,2,1}r_1 + q_{1,1,2,2}r_2) (q_{2,1,1,1}r_1 + q_{2,1,1,2}r_2) = 0
\end{align}
\begin{align}
&(q_{1,2,1,1}r_1 + q_{1,2,1,2}r_2) (q_{2,2,2,1}r_1 - q_{2,2,2,2}r_2) - (q_{1,2,2,1}r_1 + q_{1,2,2,2}r_2) (q_{2,2,1,1}r_1 + q_{2,2,1,2}r_2) = 0
\end{align}
\begin{align}
r_1 + r_2 - 1 = 0 \label{eq:sum_constraint}
\end{align}
\normalsize
The values of $q$ must be such that the system, considered as a system of $r_1$ and $r_2$, has a solution. Replacing 1 with the dummy variable $h$ in Equation \ref{eq:sum_constraint} results in a system of \emph{homogeneous} polynomial equations in $r_1$, $r_2$, and $h$, and the system must have a \emph{nontrivial} solution (that is, a solution other than all variables being zero). For a system of $n$ homogeneous polynomials in $n$ variables, the \emph{resultant} is the unique (up to a constant) polynomial in the coefficients of the polynomials (here, the values of $q$) whose vanishing is equivalent to the system having trivial solutions \citep{cox2006using}, and is irreducible. We used Macaulay 2 to compute the resultant, which has degree 8 and 218 terms. As both the \gls{mag} model over $\mathbf{O}$ and $\mathbf{SHM}($G$, S = \hat{s})$ are saturated, the resultant is an example of a non-\gls{ci}, non-factorisation constraint.

While our concern in this example is the existence of solutions to the system, \citet{evans2015recovering} derived a necessary and sufficient condition for the existence of \emph{finitely many} solutions when $Q \in \bnmodel(\mathcal{G}_{\mathbf{O}})[^{O_4}$ (clearly, a solution exists in this case). Their goal was to recover a distribution $P \in \bnmodel(\mathcal{G})$ from its conditional $Q$ given $S$. If the number of solutions to the system is finite, then the system can be solved (in principle) in order to recover $P$, as $r_1 = p_1$ and $r_2 = p_2$, where $p_i = p(O_1 = i)$, will be one of the solutions, and $p_{i,j,k,l} = q_{i,j,k,l}p_l$.
\end{xmp}

\begin{figure}[htb]
	\centering
	\begin{tikzpicture}
		\node (O1) at (0, 0) {$O_1$};
		\node (O2) at (1, 0) {$O_2$};
		\node (O3) at (2, 0) {$O_3$};
		\node (O4) at (1, -1) {$O_4$};
		\node (S) at (1, -2) {$S$};
		\draw[->] (O1) -- (O2);
		\draw[->] (O1) -- (O4);
		\draw[->] (O2) -- (O3);
		\draw[->] (O2) -- (O4);
		\draw[->] (O3) -- (O4);
		\draw[->] (O4) -- (S);
	\end{tikzpicture}
	\caption{The \gls{dag} of a \gls{bn} model which, under conditioning on the value a certain variable ($S$), includes a non-\gls{ci}, non-factorisation constraint (See Example \ref{xmp:constraint}).}
	\label{fig:case_3}
\end{figure}
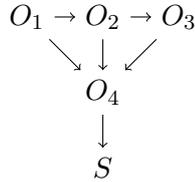

\section{Conclusion and future work}

In this work, preliminary results were provided towards characterising the constraints imposed on a distribution in a \gls{bn} model under selection. Specifically, it was shown that the only cases that need to be characterised are the ones where the selection nodes have no children, the sets of parents of the selection nodes are non-nested, and all nodes are compelled ancestors of the selection nodes. In addition, in the case of a single selection node, selection reduces to conditioning on the parents of the selection node in the induced subgraph over the observed nodes. Furthermore, an algorithm was designed for identifying compelled ancestors from the \gls{cpdag}, thereby eliminating the need to enumerate the \glspl{dag} in the Markov equivalence class. This is a useful result on its own in causal structure learning, as compelled ancestors correspond to definite causes. Finally, a non-\gls{ci}, non-factorisation constraint in a \gls{bn} model under selection was computed for the first time.

Future work includes further reducing the characterisation problem, interpreting the constraints, devising a graph representation of them, unifying that representation with \glspl{mdag}, characterising the equivalence classes of the unified graphs, and, ultimately, devising structure-learning algorithms for the graphs.

\newpage

\appendix
\section*{Appendix -- proofs}
\label{sec:appendix}

\begin{proof}[Proof of Lemma \ref{lmm:shm}]
The proof follows from the factorisation imposed by $\bnmodel(\mathcal{G})$ on its distributions.
\end{proof}

\begin{proof}[Proof of Lemma \ref{lmm:svs_are_sinks}]
Since $\mathcal{G}' \subseteq \mathcal{G}$, $\bnmodel(\mathcal{G}') \subseteq \bnmodel(\mathcal{G})$. Thus, $\bnmodel(\mathcal{G}')[^{\mathbf{S} = \hat{\mathbf{s}}} \subseteq \bnmodel(\mathcal{G})[^{\mathbf{S} = \hat{\mathbf{s}}}$. Now suppose $P \in \bnmodel(\mathcal{G})[^{\mathbf{S} = \hat{\mathbf{s}}}$. Then there exists $Q \in \bnmodel(\mathcal{G})$ such that $Q[^{\mathbf{S} = \hat{\mathbf{s}}} = P$. Let $R \in \bnmodel(\mathcal{G}')$ such that $r(x \mid \painvalues{\mathcal{G}'}{X}) = q(x \mid \painvalues{\mathcal{G}'}{X}, \painfixedvalues{\mathcal{G}}{X} \cap \hat{\mathbf{s}})$ for each $X \in \mathbf{O} \cup \mathbf{S}$. Then
\begin{align*}
r(\mathbf{o}, \hat{\mathbf{s}}) &= \prod_{O \in \mathbf{O}} r(o \mid \painvalues{\mathcal{G}'}{O}) \prod_{S \in \mathbf{S}} r(\hat{s} \mid \painvalues{\mathcal{G}'}{S}) \\
&= \prod_{O \in \mathbf{O}} q(o \mid \painvalues{\mathcal{G}'}{O}, \painfixedvalues{\mathcal{G}}{O} \cap \hat{\mathbf{s}}) \prod_{S \in \mathbf{S}} q(\hat{s} \mid \painvalues{\mathcal{G}'}{S}, \painfixedvalues{\mathcal{G}}{S} \cap \hat{\mathbf{s}}) = q(\mathbf{o}, \hat{\mathbf{s}})
\end{align*}
Therefore, $r(\mathbf{o} \mid \hat{\mathbf{s}}) = q(\mathbf{o} \mid \hat{\mathbf{s}}) = p(\mathbf{o})$. Thus, $P \in \bnmodel(\mathcal{G}')[^{\mathbf{S} = \hat{\mathbf{s}}}$.
\end{proof}

\begin{proofln}[Proof of Lemma \ref{lmm:svs_parents_are_non_nested}]
\emph{Forward direction:} Suppose $P \in \bnmodel(\mathcal{G}')[^{\mathbf{S}' = \hat{\mathbf{s}}'}$. Then there exists $Q \in \bnmodel(\mathcal{G}')$ such that $Q[^{\mathbf{S}' = \hat{\mathbf{s}}'} = P$. Let $R \in \bnmodel(\mathcal{G})$ such that $r(z \mid \painvalues{\mathcal{G}}{Z}) = q(z \mid \painvalues{\mathcal{G}'}{Z})$ for each $Z \in \mathbf{O} \cup \mathbf{S}'$ and $r(\hat{s}_2 \mid \painvalues{\mathcal{G}}{S_2}) = 1$. Then
\begin{align*}
r(\mathbf{o}, \hat{\mathbf{s}}) &= r(\hat{s}_2 \mid \painvalues{\mathcal{G}}{S_2}) \prod_{O \in \mathbf{O}} r(o \mid \painvalues{\mathcal{G}}{O}) \prod_{S' \in \mathbf{S}'} r(\hat{s}' \mid \painvalues{\mathcal{G}}{S'}) \\
&= \prod_{O \in \mathbf{O}} q(o \mid \painvalues{\mathcal{G}}{O}) \prod_{S' \in \mathbf{S}'} q(\hat{s}' \mid \painvalues{\mathcal{G}}{S'}) = q(\mathbf{o}, \hat{\mathbf{s}}')
\end{align*}
Therefore, $r(\mathbf{o} \mid \hat{\mathbf{s}}) = q(\mathbf{o} \mid \hat{\mathbf{s}}') = p(\mathbf{o})$. Thus, $P \in \bnmodel(\mathcal{G})[^{\mathbf{S} = \hat{\mathbf{s}}}$.

\emph{Reverse direction:} Suppose $P \in \bnmodel(\mathcal{G})[^{\mathbf{S} = \hat{\mathbf{s}}}$. Then there exists $Q \in \bnmodel(\mathcal{G})$ such that $Q[^{\mathbf{S} = \hat{\mathbf{s}}} = P$. Let $\mathbf{Z} = \mathbf{S}' \setminus \{S_1\}$ and $R \in \bnmodel(\mathcal{G}')$ such that $r(u \mid \painvalues{\mathcal{G}'}{U}) = q(u \mid \painvalues{\mathcal{G}}{U})$ for each $U \in \mathbf{O} \cup \mathbf{Z}$ and $r(\hat{s}_1 \mid \painvalues{\mathcal{G}'}{S_1}) = q(\hat{s}_1 \mid \painvalues{\mathcal{G}}{S_1}) \cdot q(\hat{s}_2 \mid \painvalues{\mathcal{G}}{S_2})$. Then
\begin{align*}
r(\mathbf{o}, \hat{\mathbf{s}}') &= r(\hat{s}_1 \mid \painvalues{\mathcal{G}'}{S_1}) \prod_{O \in \mathbf{O}} r(o \mid \painvalues{\mathcal{G}'}{O}) \prod_{Z \in \mathbf{Z}} r(\hat{z} \mid \painvalues{\mathcal{G}'}{Z}) \\
&= q(\hat{s}_1 \mid \painvalues{\mathcal{G}}{S_1}) \cdot q(\hat{s_2} \mid \painvalues{\mathcal{G}}{S_2}) \prod_{O \in \mathbf{O}} q(o \mid \painvalues{\mathcal{G}}{O}) \prod_{Z \in \mathbf{Z}} q(\hat{z} \mid \painvalues{\mathcal{G}}{Z}) \\
&= \prod_{O \in \mathbf{O}} q(o \mid \painvalues{\mathcal{G}}{O}) \prod_{S \in \mathbf{S}} q(\hat{s} \mid \painvalues{\mathcal{G}}{S}) = q(\mathbf{o}, \hat{\mathbf{s}})
\end{align*}
Therefore, $r(\mathbf{o} \mid \hat{\mathbf{s}}') = q(\mathbf{o} \mid \hat{\mathbf{s}}) = p(\mathbf{o})$. Thus, $P \in \bnmodel(\mathcal{G}')[^{\mathbf{S} = \hat{\mathbf{s}}'}$.
\end{proofln}

\begin{proofln}[Proof of Theorem \ref{thm:ancestors}]
\emph{Forward direction:} Suppose $P \in \bnmodel(\mathcal{G})[^{\mathbf{S} = \hat{\mathbf{s}}}$. Then there exists $Q \in \bnmodel(\mathcal{G})$ such that $Q[^{\mathbf{S} = \hat{\mathbf{s}}} = P$. Since $p(\mathbf{x} \setminus \mathbf{s}) > 0$, $q(\mathbf{x} \setminus \mathbf{s}, \hat{\mathbf{s}}) > 0$. Let $R_1 \in \bnmodel(\mathcal{G}_1)$ such that $r_1(x \mid \painvalues{\mathcal{G}_1}{X}) = q(x \mid \painvalues{\mathcal{G}}{X})$ for each $X \in \mathbf{X}$. Then
\begin{align*}
q(\mathbf{x}) &= \sum_{\mathbf{y}} q(\mathbf{x}, \mathbf{y}) = \prod\limits_{X \in \mathbf{X}} q(x \mid \painvalues{\mathcal{G}}{X}) \sum\limits_{\mathbf{y}} \prod\limits_{Y \in \mathbf{Y}} q(y \mid \painvalues{\mathcal{G}}{Y} \cap \mathbf{x}, \painvalues{\mathcal{G}}{Y} \cap \mathbf{y}) \\
&= \prod\limits_{X \in \mathbf{X}} q(x \mid \painvalues{\mathcal{G}}{X}) = \prod\limits_{X \in \mathbf{X}} r_1(x \mid \painvalues{\mathcal{G}}{X}) = r_1(\mathbf{x})
\end{align*}
Therefore, $r_1(\mathbf{x} \setminus \mathbf{s} \mid \hat{\mathbf{s}}) = q(\mathbf{x} \setminus \mathbf{s} \mid \hat{\mathbf{s}}) = p(\mathbf{x} \setminus \mathbf{s}) = p_1(\mathbf{x} \setminus \mathbf{s})$. Thus, $P_1 \in \bnmodel(\mathcal{G}_1)[^{\mathbf{S} = \hat{\mathbf{s}}}$. 

Let $R_2 \in \bnmodel(\mathcal{G}_2)$ such that $r_2(y \mid \painvalues{\mathcal{G}_2}{Y}) = q(y \mid \painvalues{\mathcal{G}}{Y})$ for each $Y \in \mathbf{Y}$. Then
\begin{align*}
p_2(\mathbf{y} \mid \mathbf{x} \setminus \mathbf{s}) &= p(\mathbf{y} \mid \mathbf{x} \setminus \mathbf{s}) = q(\mathbf{y} \mid \mathbf{x} \setminus \mathbf{s}, \hat{\mathbf{s}}) = \frac{q(\mathbf{o}, \hat{\mathbf{s}})}{q(\mathbf{x} \setminus \mathbf{s}, \hat{\mathbf{s}})} = \prod_{Y \in \mathbf{Y}} q(y \mid \painvalues{\mathcal{G}}{Y}) \\
&= \prod_{Y \in \mathbf{Y}} r_2(y \mid \painvalues{\mathcal{G}_2}{Y}) = r_2(\mathbf{y} \mid \mathbf{x} \setminus \mathbf{s})
\end{align*}
Thus, $P_2 \in \bnmodel(\mathcal{G}_2)$.

\emph{Reverse direction:} Suppose $P_1 \in \bnmodel(\mathcal{G}_1)[^{\mathbf{S} = \hat{\mathbf{s}}}$ and $P_2 \in \bnmodel(\mathcal{G}_2)$. Then there exists $Q_1 \in \bnmodel(\mathcal{G}_1)$ such that $Q_1[^{\mathbf{S} = \hat{\mathbf{s}}} = P_1$. Let $R \in \bnmodel(\mathcal{G})$ such that $r(x \mid \painvalues{\mathcal{G}}{X}) = q_1(x \mid \painvalues{\mathcal{G}_1}{X})$ for each $X \in \mathbf{X}$ and $r(y \mid \painvalues{\mathcal{G}}{Y}) = p_2(y \mid \painvalues{\mathcal{G}_2}{Y})$ for each $Y \in \mathbf{Y}$. Then
\begin{align*}
r(\mathbf{o}, \mathbf{s}) &= \prod\limits_{X \in \mathbf{X}} r(x \mid \painvalues{\mathcal{G}}{X}) \prod\limits_{Y \in \mathbf{Y}} r(y \mid \painvalues{\mathcal{G}}{Y}) \\
&= \prod_{X \in \mathbf{X}} q_1(x \mid \painvalues{\mathcal{G}_1}{X}) \prod_{Y \in \mathbf{Y}} p_2(y \mid \painvalues{\mathcal{G}_2}{Y}) = q_1(\mathbf{x}) p_2(\mathbf{y} \mid \mathbf{x} \setminus \mathbf{s}) 
\end{align*}
and
\begin{align*}
r(\hat{\mathbf{s}}) &= \sum_{\mathbf{o}} r(\mathbf{o}, \hat{\mathbf{s}}) = \sum_{\mathbf{x} \setminus \mathbf{s}} q_1(\mathbf{x} \setminus \mathbf{s}, \hat{\mathbf{s}}) \sum_{\mathbf{y}} p_2(\mathbf{y} \mid \mathbf{x} \setminus \mathbf{s}) = q_1(\hat{\mathbf{s}}) > 0
\end{align*}
Therefore, 
 \begin{align*}
r(\mathbf{o} \mid \hat{\mathbf{s}}) &= \frac{r(\mathbf{o}, \hat{\mathbf{s}})}{r(\hat{\mathbf{s}})} = \frac{q_1(\mathbf{x} \setminus \mathbf{s}, \hat{\mathbf{s}})p_2(\mathbf{y} \mid \mathbf{x} \setminus \mathbf{s})}{q_1(\hat{\mathbf{s}})} = q_1(\mathbf{x} \setminus \mathbf{s} \mid \hat{\mathbf{s}}) p_2(\mathbf{y} \mid \mathbf{x} \setminus \mathbf{s}) \\
&= p_1(\mathbf{x} \setminus \mathbf{s})p_2(\mathbf{y} \mid \mathbf{x} \setminus \mathbf{s}) = p(\mathbf{x} \setminus \mathbf{s})p(\mathbf{y} \mid \mathbf{x} \setminus \mathbf{s}) = p(\mathbf{o})
\end{align*}
Thus, $P \in \bnmodel(\mathcal{G})[^{\mathbf{S} = \hat{\mathbf{s}}}$.
\end{proofln}

Let  $\mathcal{G}$ be a directed graph over $\mathbf{V}$ and $\mathbf{X} \subseteq \mathbf{V}$. $\mathbf{X}$ is \emph{ancestral} if $\anin{\mathcal{G}}{\mathbf{X}} \subseteq \mathbf{X}$. Clearly, $\mathbf{X}$ is ancestral if and only if there are no edges from $\mathbf{V} \setminus \mathbf{X}$ to $\mathbf{X}$.

\begin{lmm}
\label{lmm:ancestral_set_intersection}
Let $\mathbf{G}$ be a Markov equivalence class of \glspl{dag} over $\mathbf{V}$, $\{\mathcal{G}_1,\mathcal{G}_2\} \subseteq \mathbf{G}$, and $\mathbf{A}_1$ and $\mathbf{A}_2$ be ancestral sets in $\mathcal{G}_1$ and $\mathcal{G}_2$, respectively. There exists $\mathcal{G} \in \mathbf{G}$ such that $\mathbf{A}_1 \cup \mathbf{A}_2$ is ancestral in $\mathcal{G}$.
\end{lmm}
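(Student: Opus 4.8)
The plan is to convert the claim into a question about edge orientations. Write $\mathbf{A} = \mathbf{A}_1 \cup \mathbf{A}_2$. Every $\mathcal{G} \in \mathbf{G}$ has the skeleton and the unshielded colliders of the class, so the \emph{boundary links} of $\mathbf{A}$, i.e.\ those joining a node of $\mathbf{A}$ to a node of $\mathbf{V} \setminus \mathbf{A}$, are common to all members of $\mathbf{G}$. By the characterisation of ancestrality, $\mathbf{A}$ is ancestral in $\mathcal{G}$ exactly when every boundary link is oriented out of $\mathbf{A}$ in $\mathcal{G}$. So the goal becomes: orient the undirected edges of the \gls{cpdag} $\mathcal{P}$ of $\mathbf{G}$ so that the result is acyclic, introduces no unshielded collider beyond those already in the class, and sends every boundary link of $\mathbf{A}$ outward.

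First I would record what the hypotheses already supply. Fix a boundary link between $X \in \mathbf{A}$ and $W \in \mathbf{V}\setminus\mathbf{A}$, and suppose $X \in \mathbf{A}_1$. Since $W \notin \mathbf{A} \supseteq \mathbf{A}_1$ and $\mathbf{A}_1$ is ancestral in $\mathcal{G}_1$, this link is oriented $X \to W$ in $\mathcal{G}_1$; the symmetric statement holds for $\mathbf{A}_2$. Hence no boundary link of $\mathbf{A}$ is compelled into $\mathbf{A}$ in $\mathcal{P}$, so each one can be oriented outward \emph{individually}. What is left is the global requirement of orienting them all outward simultaneously while staying inside $\mathbf{G}$.

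The hard part is precisely this simultaneity, and it localises at an outside node $W \in \mathbf{V}\setminus\mathbf{A}$ that has two non-adjacent neighbours $X, X' \in \mathbf{A}$: orienting both boundary links outward creates $X \to W \leftarrow X'$, which must already be an unshielded collider of the class or the orientation escapes $\mathbf{G}$. Carrying the plan forward shows that this obstacle is genuine and, for the union, fatal. On $\mathbf{V} = \{X, W, Y\}$ with skeleton $X \undirectededge W \undirectededge Y$ and no collider, the class is $\{\, X \to W \to Y,\ X \leftarrow W \to Y,\ X \leftarrow W \leftarrow Y \,\}$; here $\mathbf{A}_1 = \{X\}$ is ancestral in $X \to W \to Y$ and $\mathbf{A}_2 = \{Y\}$ is ancestral in $X \leftarrow W \leftarrow Y$, yet $\mathbf{A}_1 \cup \mathbf{A}_2 = \{X, Y\}$ is ancestral in no member of the class, since that would demand the forbidden collider $X \to W \leftarrow Y$. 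Thus the statement as worded cannot be established.

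The same reduction does, however, prove the version suggested by the lemma's label, in which the union is replaced by the intersection. With $\mathbf{A} = \mathbf{A}_1 \cap \mathbf{A}_2$, a problematic outside node $W$ lies outside $\mathbf{A}_1$ or outside $\mathbf{A}_2$; say $W \notin \mathbf{A}_1$. Then its neighbours $X, X' \in \mathbf{A} \subseteq \mathbf{A}_1$ already satisfy $X \to W \leftarrow X'$ in $\mathcal{G}_1 \in \mathbf{G}$, so this collider belongs to the class and orienting those links outward creates nothing new. Having thereby shown that the outward orientation of all boundary links of $\mathbf{A}_1 \cap \mathbf{A}_2$ introduces no new unshielded collider, the only remaining step is routine: extend it to a full acyclic, collider-preserving orientation of $\mathcal{P}$ (for instance by completing a topological order in the spirit of Meek's rules), which yields the desired $\mathcal{G} \in \mathbf{G}$.
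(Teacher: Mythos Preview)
You are right that the lemma, as written with $\mathbf{A}_1 \cup \mathbf{A}_2$, is false; your three-node counterexample is valid. The lemma's label (``ancestral set intersection''), its invocation in the proof of Theorem~\ref{thm:compelled_ancestor_dag_edges} (where the running intersection $\mathbf{A}_1\cap\cdots\cap\mathbf{A}_i$ appears), and the internal logic of the paper's own argument (the case ``otherwise $Y\in\mathbf{A}_2\setminus\mathbf{A}_1$'' is vacuous under $\cup$ but exactly right under $\cap$, and the appeal to ancestrality of $\mathbf{A}_2$ needs $X\in\mathbf{A}_2$) all confirm that every $\cup$ should be $\cap$.

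For the corrected statement your route and the paper's diverge. The paper never touches the \gls{cpdag}: it builds the witness $\mathcal{G}$ explicitly by taking every edge with both endpoints in $\mathbf{A}_2$ from $\mathcal{G}_1$ and every remaining edge from $\mathcal{G}_2$, and then verifies directly that (i) $\mathcal{G}$ is acyclic (a cycle would have to enter $\mathbf{A}_2$ along an edge copied from $\mathcal{G}_2$, contradicting ancestrality of $\mathbf{A}_2$ there), (ii) the unshielded colliders are unchanged (a ``mixed'' triple has its $\mathcal{G}_2$-edge pointing out of $\mathbf{A}_2$, hence is a noncollider in $\mathcal{G}$ and in both $\mathcal{G}_i$), and (iii) every edge leaving $\mathbf{A}_1\cap\mathbf{A}_2$ is outgoing, by a two-case check. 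This is short, constructive, and self-contained.

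Your \gls{cpdag}-based sketch correctly shows that no boundary link of $\mathbf{A}_1\cap\mathbf{A}_2$ is compelled inward and that sending two boundary links into a common outside vertex reproduces a collider already present in some $\mathcal{G}_i$. But the step you call ``routine'' --- extending this partial orientation of $\mathcal{P}$ to a full member of $\mathbf{G}$ --- is where the work actually lies: absence of new unshielded colliders and of directed cycles among the edges you have oriented does not by itself guarantee a consistent \gls{dag} extension, and you would need either a cited extension theorem or an explicit construction. The paper's splicing of $\mathcal{G}_1$ and $\mathcal{G}_2$ is precisely such a construction, and it bypasses the \gls{cpdag} entirely.
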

\begin{proof}
Let $\mathcal{G}$ be the graph over $\mathbf{V}$ with edges between members of $\mathbf{A}_2$ taken from $\mathcal{G}_1$, and edges between members of $\mathbf{V} \setminus \mathbf{A}_2$ or between a member of $\mathbf{A}_2$ and a member of $\mathbf{V} \setminus \mathbf{A}_2$ taken from $\mathcal{G}_2$. Clearly, $\mathcal{G}$ has the same skeleton as $\mathcal{G}_1$ and $\mathcal{G}_2$.

Suppose that $c$ is a directed cycle in $\mathcal{G}$. If all nodes in $c$ were in $\mathbf{A}_2$ (resp., $\mathbf{V} \setminus \mathbf{A}_2$), then $c$ would be a directed cycle in $\mathcal{G}_1$ (resp., $\mathcal{G}_2$). Therefore, there exists a node in $c$ that is in $\mathbf{A}_2$ and one that is in $\mathbf{V} \setminus \mathbf{A}_2$, which implies that there is an edge $X \rightarrow Y$ in $c$ such that $X \in \mathbf{V} \setminus \mathbf{A}_2$ and $Y \in \mathbf{A}_2$. This is a contradiction, as $X \rightarrow Y$ is taken from $\mathcal{G}_2$ and $\mathbf{A}_2$ is ancestral in $\mathcal{G}_2$. Therefore, $\mathcal{G}$ is acyclic.

Let $(X, Y, Z)$ be an unshielded triple in $\mathcal{G}$. If both edges in the triple are taken from either $\mathcal{G}_1$ or $\mathcal{G}_2$, then $(X, Y, Z)$ is a collider in $\mathcal{G}$ if and only if it is a collider in $\mathcal{G}_1$ and $\mathcal{G}_2$. Suppose that the first edge is taken from $\mathcal{G}_1$ and the second from $\mathcal{G}_2$, which implies that $\{X, Y\} \in \mathbf{A}_2$ and $Z \in \mathbf{V} \setminus \mathbf{A}_2$. Since  $\mathbf{A}_2$ is ancestral in $\mathcal{G}_2$, the second edge is out of $Y$. Therefore, $(X, Y, Z)$ is a noncollider in $\mathcal{G}$, $\mathcal{G}_1$, and $\mathcal{G}_2$. Thus, $\mathcal{G} \in \mathbf{G}$. 

Suppose that there is an edge between $X \in \mathbf{A}_1 \cup \mathbf{A}_2$ and $Y \in \mathbf{V} \setminus (\mathbf{A}_1 \cup \mathbf{A}_2)$ in $\mathcal{G}$. If $Y \in \mathbf{V} \setminus \mathbf{A}_2$, then the edge is taken from $\mathcal{G}_2$ and is out of $X$ because $\mathbf{A}_2$ is ancestral in $\mathcal{G}_2$. Otherwise $Y \in \mathbf{A}_2 \setminus \mathbf{A}_1$ and the edge is taken from $\mathcal{G}_1$ and is out of $X$ because $\mathbf{A}_1$ is ancestral in $\mathcal{G}_1$. Therefore, $\mathbf{A}_1 \cup \mathbf{A}_2$ is ancestral in $\mathcal{G}$.

\end{proof}

\begin{proof}[Proof of Theorem \ref{thm:compelled_ancestor_dag_edges}]
Suppose that $\mathbf{G} = \{\mathcal{G}_1, \ldots, \mathcal{G}_i, \ldots, \mathcal{G}_n\}$ and $\mathbf{A}_i = \anin{\mathcal{G}_i}{\mathbf{Y}}$. It will be proved by induction on $i$ that, for each $i$, there exists $\mathcal{G}'_i \in \mathbf{G}$ such that $\mathbf{A}_1 \cap \ldots \cap \mathbf{A}_i$ is ancestral in $\mathcal{G}'_i$.

\emph{Base case:} Since $\mathbf{A}_1$ is ancestral in $\mathcal{G}_1$, $\mathcal{G}'_1 = \mathcal{G}_1$.

\emph{Inductive step:} Suppose that there exists $\mathcal{G}'_i \in \mathbf{G}$ such that $\mathbf{A}_1 \cap \ldots \cap \mathbf{A}_i$ is ancestral in $\mathcal{G}'_i$. Since  $\mathbf{A}_{i + 1}$ is ancestral in $\mathcal{G}_{i + 1}$, Lemma \ref{lmm:ancestral_set_intersection} implies that there exists $\mathcal{G}'_{i + 1} \in \mathbf{G}$ such that $\mathbf{A}_1 \cap \ldots \cap \mathbf{A}_{i + 1}$ is ancestral in $\mathcal{G}'_{i + 1}$.

Therefore, there exists $\mathcal{G}'_n \in \mathbf{G}$ such that $\mathbf{A}_1 \cap \ldots \cap \mathbf{A}_n = \companin{\mathbf{G}}{\mathbf{Y}}$ is ancestral in $\mathcal{G}'_n$, which means $\mathbf{A}_n = \companin{\mathbf{G}}{\mathbf{Y}}$.

\end{proof}

\begin{proofln}[Proof of Lemma \ref{lmm:compelled_ancestor_dag_condition}]
\emph{Forward direction:} Suppose that $\anin{\mathcal{G}}{\mathbf{Y}} = \mathbf{Z}$. If there is an edge $X \rightarrow Z$ such that $X \in \mathbf{V} \setminus \mathbf{Z}$ and $Z \in \mathbf{Z}$ in $\mathcal{G}$, then $X \in \anin{\mathcal{G}}{\mathbf{Y}}$. This is a contradiction. Therefore, every edge between $X \in \mathbf{V} \setminus \mathbf{Z}$ and $Z \in \mathbf{Z}$ is out of $Z$.

\emph{Reverse direction:} Suppose that every edge between $X \in \mathbf{V} \setminus \mathbf{Z}$ and $Z \in \mathbf{Z}$ is out of $Z$. Since directed paths from $\mathbf{V} \setminus \mathbf{Z}$ to $\mathbf{Y}$ must go through $\mathbf{Z}$, there are no directed paths from $\mathbf{V} \setminus \mathbf{Z}$ to $\mathbf{Y}$ in $\mathbf{G}$. Thus, $\anin{\mathcal{G}}{\mathbf{Y}} = \mathbf{Z}$.
\end{proofln}

\begin{lmm}[\citet{meek1995causal}, Lemma 1]
\label{lmm:directed_edge_undirected_edge} Let $\mathcal{P}$ be a \gls{cpdag}. If triple $X \rightarrow Y \undirectededge Z$ exists in $\mathcal{P}$, then edge $X \rightarrow Z$ exists in $\mathcal{P}$.
\end{lmm}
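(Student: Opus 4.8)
The plan is to translate the claim about the \gls{cpdag} $\mathcal{P}$ into claims about the \glspl{dag} in the Markov equivalence class $\mathbf{G}$ represented by $\mathcal{P}$, using four standard facts: (i) an edge directed in $\mathcal{P}$ is oriented the same way in every \gls{dag} in $\mathbf{G}$; (ii) an edge left undirected in $\mathcal{P}$ is oriented one way in some \gls{dag} in $\mathbf{G}$ and the other way in another; (iii) all \glspl{dag} in $\mathbf{G}$ have the same skeleton and the same unshielded colliders; and (iv) $\mathcal{P}$ contains no semi-directed (partially directed) cycle, i.e.\ no cycle with at least one directed edge and no edge pointing against a chosen cyclic traversal. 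Given $X \rightarrow Y \undirectededge Z$ in $\mathcal{P}$, I would first show that $X$ and $Z$ are adjacent, and then that the edge between them is $X \rightarrow Z$.

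For adjacency, using (ii) I would pick $\mathcal{G}_1, \mathcal{G}_2 \in \mathbf{G}$ with $Y \rightarrow Z$ in $\mathcal{G}_1$ and $Z \rightarrow Y$ in $\mathcal{G}_2$; by (i) both contain $X \rightarrow Y$. If $X$ and $Z$ were non-adjacent, then $(X, Y, Z)$ would be an unshielded triple on which $Y$ is a collider in $\mathcal{G}_2$ (both edges point into $Y$) but a non-collider in $\mathcal{G}_1$ (since $X \rightarrow Y \rightarrow Z$), contradicting (iii). Hence $X$ and $Z$ are adjacent, and it remains only to orient this edge.

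To pin down the orientation I would rule out the two alternatives to $X \rightarrow Z$. If the edge were undirected in $\mathcal{P}$, then $X \rightarrow Y$, $Y \undirectededge Z$, $Z \undirectededge X$ would form a semi-directed cycle in $\mathcal{P}$ (only $X \rightarrow Y$ is directed, and it agrees with the traversal $X, Y, Z$), contradicting (iv). If the edge were oriented $Z \rightarrow X$, then $Z \rightarrow X \rightarrow Y$ would be a directed path in $\mathcal{P}$, so by (i) $Z$ would be an ancestor of $Y$ in every \gls{dag} in $\mathbf{G}$; but by (ii) some $\mathcal{G}' \in \mathbf{G}$ has $Y \rightarrow Z$, and together with $Z \rightarrow X \rightarrow Y$ this gives the directed cycle $Y \rightarrow Z \rightarrow X \rightarrow Y$ in $\mathcal{G}'$, a contradiction. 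Therefore the edge between $X$ and $Z$ is $X \rightarrow Z$ in $\mathcal{P}$.

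The one substantial ingredient, and the step I expect to need the most care, is fact (iv): that a \gls{cpdag} contains no semi-directed cycle, equivalently that no directed edge of a \gls{cpdag} joins two nodes lying in the same connected component of its undirected subgraph. This is a standard structural property of \glspl{cpdag} (see, e.g., \citet{meek1995causal}), and I would invoke it rather than reprove it. Everything else uses only acyclicity of \glspl{dag} together with the skeleton and unshielded-collider invariants of a Markov equivalence class; in particular, no case analysis on why $X \rightarrow Y$ is compelled is required. This is unavoidable in the following sense: $\{X \rightarrow Y,\ X \undirectededge Z,\ Y \undirectededge Z\}$ is already closed under the usual edge-orientation rules, so a purely local, three-node argument cannot exclude it --- it is excluded only by the global chain-graph property (iv).
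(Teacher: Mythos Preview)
The paper does not supply its own proof of this lemma; it is quoted from \citet{meek1995causal} and used as a black box. Your argument is correct and self-contained modulo the one structural fact you flag, namely that a \gls{cpdag} admits no semi-directed cycle (equivalently, is a chain graph). That is indeed the right ingredient: as you observe, the three-node configuration $X \rightarrow Y$, $X \undirectededge Z$, $Y \undirectededge Z$ is closed under Meek's local orientation rules, so it cannot be excluded by a three-node argument alone and one must appeal to a global property of \glspl{cpdag}. The adjacency step (via two \glspl{dag} orienting $Y \undirectededge Z$ oppositely, forcing inconsistent unshielded-collider status at $Y$) and the exclusion of $Z \rightarrow X$ (via the directed cycle $Y \rightarrow Z \rightarrow X \rightarrow Y$ it would create in the \gls{dag} with $Y \rightarrow Z$) are both correct as written.
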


A simple path from $X$ to $Y$ where all directed edges are directed towards $Y$ is called \emph{possibly directed}.

\begin{lmm}
\label{lmm:updp_out_of_x} Let $p$ be an unshielded possibly directed path from $X$ to $Y$ in a \gls{cpdag}. If $p$ is out of $X$, then $p$ is directed.
\end{lmm}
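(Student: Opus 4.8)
The plan is to run a minimal-counterexample argument and invoke Meek's Lemma~1 (Lemma~\ref{lmm:directed_edge_undirected_edge}) to produce a shield whenever the first undirected edge appears.

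First I would set up notation: write $p = (X = V_0, V_1, \ldots, V_n = Y)$. If $n = 1$ there is nothing to prove, since ``$p$ is out of $X$'' already says the single edge $V_0 \to V_1$ is directed towards $Y$. So assume $n \ge 2$ and, for contradiction, that $p$ is not directed. Let $i$ be the smallest index with $1 \le i \le n$ such that the edge between $V_{i-1}$ and $V_i$ is undirected, i.e.\ $V_{i-1} \undirectededge V_i$. Because $p$ is possibly directed, every non-undirected edge of $p$ is directed towards $Y$; combined with minimality of $i$, this means each edge $V_{j-1} \to V_j$ with $1 \le j < i$ is directed towards $Y$. Since the first edge is out of $X$ and hence directed, we have $i \ge 2$, and in particular the edge between $V_{i-2}$ and $V_{i-1}$ is $V_{i-2} \to V_{i-1}$.

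Next I would apply Lemma~\ref{lmm:directed_edge_undirected_edge} to the triple $V_{i-2} \to V_{i-1} \undirectededge V_i$ in $\mathcal{P}$: it yields an edge $V_{i-2} \to V_i$ in $\mathcal{P}$. Hence $V_{i-2}$ and $V_i$ are adjacent, so the triple $(V_{i-2}, V_{i-1}, V_i)$ on $p$ is shielded, contradicting the hypothesis that $p$ is unshielded. Therefore $p$ has no undirected edge, i.e.\ $p$ is directed.

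Honestly there is no serious obstacle here; the argument is short. The only points that require a little care are the bookkeeping around the minimal index $i$ — specifically, justifying that all edges of $p$ strictly before the first undirected one must be oriented towards $Y$ (which is exactly what ``possibly directed'' buys us, together with the assumption that the first edge is out of $X$) — and isolating the trivial $n=1$ case so that the triple $(V_{i-2}, V_{i-1}, V_i)$ is well defined. Once those are in place, the contradiction is immediate from Meek's Lemma~1.
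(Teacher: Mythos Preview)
Your proof is correct and follows essentially the same approach as the paper's: both locate the first undirected edge on $p$ (you via a minimal-index/least-counterexample argument, the paper via induction on subpaths from $X$), observe that the preceding edge is directed, and invoke Lemma~\ref{lmm:directed_edge_undirected_edge} to produce a shielding edge contradicting the unshielded hypothesis. The only difference is packaging; the logical content is identical.
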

\begin{proof}
It will be proved by induction on the number of nodes on a subpath of $p$ starting from $X$ that every such subpath is directed.

\emph{Base case:} The result follows by the hypothesis.

\emph{Inductive step:} Suppose that $p(X, V)$ is a directed path. Let $U$ be the predecessor and $W$ be the successor of $V$ on $p$. If the edge between $V$ and $W$ is undirected, then Lemma \ref{lmm:directed_edge_undirected_edge} says that edge $U \rightarrow W$ exists. Since $p$ is unshielded, this is a contradiction. Therefore, the edge between $V$ and $W$ is directed towards $W$.

\end{proof}

\begin{crl}
\label{crl:updp_form} In a \gls{cpdag}, an unshielded possibly directed path from $X$ to $Y$ takes one of three forms:
\begin{enumerate}
	\item $X \rightarrow \cdots \rightarrow Y$
	\item $X \undirectededge \cdots \undirectededge Y$
	\item $X \undirectededge \cdots \undirectededge \rightarrow \cdots \rightarrow Y$
\end{enumerate}
\end{crl}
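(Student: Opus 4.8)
The plan is to reason directly about the sequence of edges of the path, traversed from $X$ to $Y$. Write $p = (V_0, V_1, \dots, V_k)$ with $V_0 = X$, $V_k = Y$, and let $e_i$ denote the edge between $V_{i-1}$ and $V_i$. Because $p$ is possibly directed, every $e_i$ is either undirected or directed ``forward'', i.e.\ as $V_{i-1} \to V_i$. The three cases in the statement are exactly the assertion that the undirected edges of $p$ occupy an initial segment $e_1, \dots, e_j$ and the (forward) directed edges occupy the complementary final segment $e_{j+1}, \dots, e_k$, for some $0 \le j \le k$.

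The one substantive step is the claim that a directed edge of $p$ is never immediately followed by an undirected edge of $p$. I would prove this by contradiction: if $e_i = (V_{i-1} \to V_i)$ and $e_{i+1} = (V_i \undirectededge V_{i+1})$, then $\mathcal{P}$ contains the triple $V_{i-1} \to V_i \undirectededge V_{i+1}$, so Lemma \ref{lmm:directed_edge_undirected_edge} forces the edge $V_{i-1} \to V_{i+1}$ into $\mathcal{P}$; but then the subpath $(V_{i-1}, V_i, V_{i+1})$ of $p$ is a shielded triple, contradicting that $p$ is unshielded.

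Given the claim, the indices $i$ for which $e_i$ is directed form an ``up-set'': if $e_i$ is directed and $i < k$ then $e_{i+1}$ is directed. Hence there is a threshold $0 \le j \le k$ with $e_1, \dots, e_j$ undirected and $e_{j+1}, \dots, e_k$ directed forward; $j = 0$ gives form (1) (equivalently, this case is Lemma \ref{lmm:updp_out_of_x}), $j = k$ gives form (2), and $0 < j < k$ gives form (3). I do not anticipate a real obstacle: the argument is one application of Lemma \ref{lmm:directed_edge_undirected_edge} plus the elementary observation that it makes the directed edges a terminal block. The only point requiring care is to keep the orientation convention straight — ``possibly directed'' must be invoked to guarantee that each directed edge of $p$ points away from $X$ along the path, so that Lemma \ref{lmm:directed_edge_undirected_edge} applies with the orientations as written.
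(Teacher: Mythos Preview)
Your proof is correct and essentially matches the paper's: both hinge on the fact (from Lemma~\ref{lmm:directed_edge_undirected_edge}) that in an unshielded path a directed edge cannot be followed by an undirected one. The only cosmetic difference is that the paper packages this step as an application of Lemma~\ref{lmm:updp_out_of_x} to the subpath starting at the first forward-directed edge, whereas you inline that lemma's inductive step directly.
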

\begin{proof}
Let $p$ be an unshielded possibly directed path from $X$ to $Y$ in a \gls{cpdag}. If $p$ is not undirected, let $A$ is the first node on $p$ such that $p(A, Y)$ is out of $A$. Owing to Lemma \ref{lmm:updp_out_of_x}, $p(A, Y)$ is directed.
\end{proof}

Let $p_1 = (X_1, \ldots, X_k)$ and $p_2 = (X_k, \ldots, X_{k + n - 1})$ be two paths in graph $\mathcal{G}$. Path $(X_1, \ldots, X_{k + n - 1})$ in $\mathcal{G}$ is the \emph{concatenation} of $p_1$ and $p_2$ and is denoted by $p_1 \oplus p_2$. 

\begin{lmm}
\label{lmm:dp_implies_unshielded_dp} Let $\mathcal{G}$ be a \gls{dag} and $p$ be a directed path from $X$ to $Y$ in $\mathcal{G}$. Then there exists an unshielded directed path from $X$ to $Y$ in $\mathcal{G}$ that goes through a subset of the nodes on $p$.
\end{lmm}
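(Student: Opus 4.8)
The plan is to prove this by a standard shortcut argument, phrased as a minimal-counterexample (equivalently, an induction on path length). First I would note that the collection of directed paths from $X$ to $Y$ all of whose nodes lie on $p$ is nonempty, since $p$ itself belongs to it. Among these, choose one, say $q = (Z_0, Z_1, \ldots, Z_m)$ with $Z_0 = X$ and $Z_m = Y$, having the fewest nodes. By construction $q$ is a directed path from $X$ to $Y$ through a subset of the nodes on $p$, so it remains only to establish that $q$ is unshielded.

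Suppose for contradiction that $q$ is shielded. Then there is a shielded triple $(Z_{i-1}, Z_i, Z_{i+1})$ on $q$, so $Z_{i-1}$ and $Z_{i+1}$ are adjacent in $\mathcal{G}$; and since $q$ is directed, its edges here are $Z_{i-1} \rightarrow Z_i$ and $Z_i \rightarrow Z_{i+1}$. I would then pin down the orientation of the edge between $Z_{i-1}$ and $Z_{i+1}$: if it were $Z_{i+1} \rightarrow Z_{i-1}$, then $Z_{i-1} \rightarrow Z_i \rightarrow Z_{i+1} \rightarrow Z_{i-1}$ would be a directed cycle, contradicting that $\mathcal{G}$ is a \gls{dag}; hence the edge must be $Z_{i-1} \rightarrow Z_{i+1}$. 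Consequently, the sequence obtained from $q$ by deleting $Z_i$, namely $(Z_0, \ldots, Z_{i-1}, Z_{i+1}, \ldots, Z_m)$, consists of distinct nodes with every consecutive pair joined by an edge oriented towards $Y$ (the only new consecutive pair being $Z_{i-1} \rightarrow Z_{i+1}$), so it is again a directed path from $X$ to $Y$, and all its nodes lie on $p$. This path has fewer nodes than $q$, contradicting the minimality of $q$. Therefore $q$ is unshielded, which finishes the proof.

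This argument is essentially routine, so there is no serious obstacle; the one point that needs care is checking that the shortcut obtained by deleting $Z_i$ is genuinely a directed path (in particular that its nodes remain distinct and its edges remain consistently oriented). This is exactly where the acyclicity hypothesis is used: it forces the shielding edge to point ``forward'', from $Z_{i-1}$ to $Z_{i+1}$, which is what keeps the shortened sequence a valid directed path.
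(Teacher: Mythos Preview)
Your proof is correct and is essentially the same argument as the paper's: both use acyclicity to force the shielding edge $Z_{i-1}\!\to\! Z_{i+1}$ and then shortcut the path, the only difference being that the paper phrases this as an iterative procedure (repeatedly remove the first shielded triple) whereas you phrase it as a minimal-length argument.
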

\begin{proof} Suppose that $p$ is shielded and let $(U, V, W)$ be the first shielded triple on $p$. The edge between $U$ and $W$ in $\mathcal{G}$ is out of $U$, because otherwise $(U, V, W)$ would be a directed cycle in $\mathcal{G}$. Therefore, $p' = p(X, U) \oplus p(W, Y)$ is a directed path from $X$ to $Y$ in $\mathcal{G}$. Repeating the same procedure with $p'$ results in an unshielded directed path from $X$ to $Y$ in $\mathcal{G}$ that goes through a subset of the nodes on $p$.
\end{proof}

In a graph $\mathcal{G}$, an edge between two nonconsecutive nodes on a simple cycle is called a \emph{chord}. $\mathcal{G}$ is \emph{chordal} if every simple cycle with four or more distinct nodes has a chord.
Let $\mathbf{G}$ be a Markov equivalence class of \glspl{dag}, $\mathcal{P}$ be its \gls{cpdag}, and $\mathcal{U}$ be the subgraph of $\mathcal{P}$ containing only the undirected edges of $\mathcal{P}$. $\mathcal{U}$ is chordal \citep[see][proof of Theorem 3]{meek1995causal}.

A \emph{clique} in $\mathcal{G}$ is a set of nodes that are all adjacent to each other; a \emph{maximal clique} is a clique that is not contained in another.  A \emph{join tree} $\mathcal{T}$ for graph $\mathcal{G}$ is a tree over the maximal cliques of $\mathcal{G}$ such that, for every pair of maximal cliques $\{\mathbf{M}_1, \mathbf{M}_2\}$, $X \in \mathbf{M}_1 \cap \mathbf{M}_2$, and $\mathbf{M}$ on the simple path from $\mathbf{M}_1$ to $\mathbf{M}_2$ in $\mathcal{T}$, $X \in \mathbf{M}$. $\mathcal{G}$ has a join tree if and only if $\mathcal{G}$ is chordal \citep{beeri1983desirability}.

A total order $<$ on the nodes of undirected graph $\mathcal{U}$ induces an orientation of $\mathcal{U}$ into a directed graph $\mathcal{G}$: if edge $X \undirectededge Y$ exists in $\mathcal{G}$, orient the edge as $X \rightarrow Y$ if $X < Y$ \citep{meek1995causal}. Clearly, $\mathcal{G}$ is acyclic. $<$ is \emph{consistent} with respect to $\mathcal{U}$ if $\mathcal{G}$ has no unshielded colliders.

A partial order $\pi$ is a \emph{tree order} for tree $\mathcal{T}$ if adjacent nodes in $T$ are comparable under $\pi$ \citep{meek1995causal}. Any tree order for $\mathcal{T}$ can be obtained by choosing a root for $\mathcal{T}$ and ordering the nodes based on their distance from the root. Tree order $\pi$ for join tree $\mathcal{T}$ for graph $\mathcal{G}$ induces a partial order $\prec_{\pi}$ on the nodes of $\mathcal{G}$ \citep{meek1995causal}: if $\pi(\mathbf{M}_1, \mathbf{M}_2)$, then for all $X \in \mathbf{M}_1 \cap \mathbf{M}_2$ and $Y \in \mathbf{M}_2$, $X \prec_{\pi} Y$.\footnote{Our definition of $\prec_{\pi}$ is different than the one of \citet{meek1995causal}, as it can be shown that according to the latter, $\prec_{\pi}$ is not partial order. However, the results of \citet{meek1995causal} still hold under our definition of $\prec_{\pi}$.} If $X$ and $Y$ are both in the minimal element of $\pi$, then they are not comparable under $\prec_{\pi}$.

\begin{lmm}[\citet{meek1995causal}, Lemma 4]
\label{lmm:consistent_order} Let $\mathcal{U}$ be a chordal graph, $\mathcal{T}$ be a join tree for $\mathcal{U}$, and $\pi$ be a tree order for $\mathcal{T}$. Any extension of $\prec_{\pi}$ to a total order is a consistent order with respect to $\mathcal{U}$.
\end{lmm}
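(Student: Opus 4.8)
The plan is to argue by contradiction. Let $<$ be any total order extending $\prec_\pi$, let $\mathcal{G}$ be the orientation of $\mathcal{U}$ that $<$ induces, and suppose towards a contradiction that $\mathcal{G}$ contains an unshielded collider $X \rightarrow Z \leftarrow Y$. Unwinding the definitions, this means exactly that $X \undirectededge Z$ and $Y \undirectededge Z$ are edges of $\mathcal{U}$, that $X$ and $Y$ are nonadjacent in $\mathcal{U}$, and that $X < Z$ and $Y < Z$. The goal is to derive that $Z \prec_\pi X$ or $Z \prec_\pi Y$; since $<$ extends $\prec_\pi$, either conclusion contradicts $X < Z$, respectively $Y < Z$.

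First I would pin down the relevant cliques. As $X \undirectededge Z$ is an edge, fix a maximal clique $\mathbf{M}_X$ with $\{X,Z\} \subseteq \mathbf{M}_X$, and similarly a maximal clique $\mathbf{M}_Y$ with $\{Y,Z\} \subseteq \mathbf{M}_Y$. Because $X$ and $Y$ are nonadjacent, no clique contains both, so $\mathbf{M}_X \neq \mathbf{M}_Y$, $X \notin \mathbf{M}_Y$, and $Y \notin \mathbf{M}_X$. Let $\mathbf{M}_X = \mathbf{N}_0, \mathbf{N}_1, \ldots, \mathbf{N}_m = \mathbf{M}_Y$ be the unique path between these two cliques in the join tree $\mathcal{T}$. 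Since $Z \in \mathbf{M}_X \cap \mathbf{M}_Y$, the join-tree (running-intersection) property gives $Z \in \mathbf{N}_i$ for every $i$. Because $\pi$ is obtained by rooting $\mathcal{T}$ at some clique, let $\mathbf{M}^\ast$ be the clique on this path nearest the root; then $\mathbf{M}^\ast$ is the unique $\prec_\pi$-minimal clique on the path, and each of the two subpaths joining $\mathbf{M}_X$ to $\mathbf{M}^\ast$ and $\mathbf{M}_Y$ to $\mathbf{M}^\ast$ ascends, i.e.\ along it every clique is the parent in $\mathcal{T}$ of its predecessor.

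Since $\mathbf{M}^\ast$ is a clique it cannot contain both $X$ and $Y$, so without loss of generality $X \notin \mathbf{M}^\ast$ (the other case is symmetric, with $Y$ and $\mathbf{M}_Y$ in place of $X$ and $\mathbf{M}_X$). Walking along the subpath from $\mathbf{M}_X$ to $\mathbf{M}^\ast$ — which starts at a clique containing $X$ and ends at one that does not — let $\mathbf{N}_a \undirectededge \mathbf{N}_{a+1}$ be the first edge along which $X$ drops out, so $X \in \mathbf{N}_a$ and $X \notin \mathbf{N}_{a+1}$. This edge lies on the ascending part, so $\mathbf{N}_{a+1}$ is the parent of $\mathbf{N}_a$ in $\mathcal{T}$, hence $\pi(\mathbf{N}_{a+1}, \mathbf{N}_a)$; moreover $Z \in \mathbf{N}_{a+1} \cap \mathbf{N}_a$ whereas $X \in \mathbf{N}_a \setminus \mathbf{N}_{a+1}$. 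By the definition of $\prec_\pi$ this forces $Z \prec_\pi X$, hence $Z < X$, contradicting $X < Z$.

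The step that needs the most care — the crux of the whole argument — is the structural description of the path $\mathbf{N}_0, \ldots, \mathbf{N}_m$ relative to $\pi$: that it has a unique $\prec_\pi$-minimal ``peak'' $\mathbf{M}^\ast$ and that it ascends towards $\mathbf{M}^\ast$ from both ends. This is just the familiar fact that in a rooted tree the path between two nodes climbs to their least common ancestor and then descends; once it is in hand, everything else (nonadjacency forcing $\mathbf{M}_X \neq \mathbf{M}_Y$, running intersection placing $Z$ in every $\mathbf{N}_i$, and the bookkeeping with the definition of $\prec_\pi$) is routine. This is essentially the argument of \citet{meek1995causal}, and it goes through with the modified definition of $\prec_\pi$ used here.
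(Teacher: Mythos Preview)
The paper does not give its own proof of this lemma; it is quoted verbatim as Lemma~4 of \citet{meek1995causal} and invoked as a black box, so there is nothing in the paper to compare your argument against. Your proof is correct and, as you yourself note, is essentially Meek's original argument carried out under the paper's modified definition of $\prec_\pi$ (one tiny slip: when you speak of the ``$\prec_\pi$-minimal clique on the path'' you mean the $\pi$-minimal one, since $\prec_\pi$ is an order on vertices, not cliques).
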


\begin{lmm}
\label{lmm:chordal_graph_simple_cycle_unshielded_triple} Let $\mathcal{U}$ be a chordal graph and $c$ be a simple cycle in $\mathcal{U}$. There exist two shielded triples on $c$.
\end{lmm}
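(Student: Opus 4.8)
The plan is to reduce the statement to the classical structure theory of chordal graphs. Write the simple cycle as $c = (Y_1, \ldots, Y_m, Y_1)$ with $Y_1, \ldots, Y_m$ distinct and $m \ge 3$, put $\mathbf{W} = \{Y_1, \ldots, Y_m\}$, and let $\mathcal{H} = \mathcal{U}_{\mathbf{W}}$ be the induced subgraph. First I would record two easy facts: $\mathcal{H}$ is connected (it contains the spanning cycle $c$), and $\mathcal{H}$ is chordal, since any simple cycle with four or more distinct nodes in $\mathcal{H}$ is also one in $\mathcal{U}$, and the chord guaranteed by chordality of $\mathcal{U}$ runs between nodes of $\mathbf{W}$, hence is an edge of $\mathcal{H}$. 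The triples on $c$ are exactly the triples $(Y_{i-1}, Y_i, Y_{i+1})$ of cyclically consecutive nodes (indices mod $m$), so such a triple is shielded precisely when the two neighbours of $Y_i$ on $c$ are adjacent in $\mathcal{H}$. Thus it suffices to find two distinct nodes of $\mathbf{W}$ whose neighbourhood in $\mathcal{H}$ is a clique (``simplicial'' nodes).

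Next I would extract the two simplicial nodes using the join-tree machinery already set up in the text. If $\mathcal{H}$ is complete, then every triple on $c$ is shielded and there are $m \ge 3$ of them, so we are done. Otherwise, since $\mathcal{H}$ is chordal it has a join tree $\mathcal{T}$ over its maximal cliques; because $\mathcal{H}$ is not complete $\mathcal{T}$ has at least two nodes, and a tree with at least two nodes has at least two leaves $\mathbf{M}_1 \ne \mathbf{M}_2$. For a leaf $\mathbf{M}_j$ with (unique) neighbour $\mathbf{M}'$ in $\mathcal{T}$, maximality of the cliques gives a node $u_j \in \mathbf{M}_j \setminus \mathbf{M}'$; the defining property of $\mathcal{T}$ then forces $u_j$ to belong to no maximal clique other than $\mathbf{M}_j$ (any other maximal clique containing $u_j$ would have $\mathbf{M}'$ on its $\mathcal{T}$-path to $\mathbf{M}_j$, so $u_j \in \mathbf{M}'$, a contradiction). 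Consequently every neighbour of $u_j$ in $\mathcal{H}$ lies in $\mathbf{M}_j$, so $N_{\mathcal{H}}(u_j)$ is a clique, i.e.\ $u_j$ is simplicial; and $u_1 \ne u_2$ because $u_1 \notin \mathbf{M}_2$. (Alternatively one may simply cite Dirac's theorem that a non-complete chordal graph has two non-adjacent simplicial vertices, but the join-tree argument stays within the vocabulary already introduced.)

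Finally I would conclude: for each $u \in \{u_1, u_2\}$, its two neighbours on $c$ lie in $N_{\mathcal{H}}(u)$, which is a clique, hence they are adjacent, so the triple on $c$ centred at $u$ is shielded; since $u_1 \ne u_2$ these are two distinct shielded triples on $c$. I do not expect a deep obstacle here — the work is bookkeeping: cleanly producing the two simplicial nodes from the leaves of the join tree (via maximality and the join-tree property) and checking the two triples read off are genuinely distinct. The only mild technicality is that a simplicial node might coincide with the chosen start vertex $Y_1$; this is harmless if ``triple on $c$'' is read cyclically, and in any case one can re-root $c$ at a node that is not simplicial whenever $\mathcal{H}$ is not complete.
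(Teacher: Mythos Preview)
Your proof is correct but follows a genuinely different route from the paper's. The paper proceeds by a direct induction on the number of edges of $c$: the base case $n=3$ is a triangle where every triple is shielded, and for $n\ge 4$ a chord $X_i\undirectededge X_j$ splits $c$ into two shorter simple cycles, from each of which the inductive hypothesis extracts two shielded triples; at most one shielded triple from each sub-cycle can involve the chord edge, leaving two shielded triples that already sit on $c$. Your argument instead passes to the induced subgraph $\mathcal{H}=\mathcal{U}_{\mathbf{W}}$ on the cycle's vertex set, observes that it is chordal and connected, and then produces two simplicial vertices of $\mathcal{H}$ via the leaves of a join tree (equivalently, Dirac's theorem); the triples on $c$ centred at those simplicial vertices are automatically shielded.

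The paper's induction is more self-contained, using nothing beyond the bare definition of chordality. Your approach is less elementary in that it invokes the join-tree machinery, but that machinery is already available in the paper, and the payoff is a cleaner, structure-revealing statement: the shielded triples come precisely from simplicial vertices of $\mathcal{H}$, which explains \emph{why} they exist rather than merely counting them. Your bookkeeping is sound, including the distinctness of $u_1$ and $u_2$ (each lies in a unique maximal clique, and these cliques differ) and the observation that the ``triple centred at $Y_1$'' is perfectly well-defined since the paper's own base case already reads triples on a simple cycle cyclically.
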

\begin{proof}
Let $c = (X_1, \ldots, X_n, X_1)$. The result will be proved by induction on the number of edges $n$ of $c$.

\emph{Base case:} If $n = 3$, then $(X_1, X_2, X_3)$ and $(X_2, X_3, X_1)$ are shielded triples on $c$.

\emph{Inductive step:} Suppose that $n \ge 4$ and the result holds for simple cycles with up to $n - 1$ edges. Since $\mathcal{U}$ is chordal, $c$ has a chord $X_i \undirectededge X_j$. By the induction hypothesis, there exist two shielded triples $t_1$ and $t_2$ on $(X_i, \ldots, X_j, X_i)$ and two shielded triples $t_3$ and $t_4$ on $(X_i, X_j, \ldots, X_n, X_1, \ldots, X_i)$. Even if $t_2 = (X_{j - 1}, X_j, X_i)$ and $t_3 = (X_i, X_j, X_{j + 1})$, $t_1$ and $t_4$ are distinct shielded triples on $c$.

\end{proof}

Let $p = (X_1, \ldots, X_n)$ be a path. Path $(X_n, \ldots, X_1)$ is the \emph{reverse} of $p$ and is denoted by $p^{-1}$. A simple cycle is shielded if there is a shielded triple on the cycle.

\begin{proofln}[Proof of Theorem \ref{thm:compelled_ancestor}]
\emph{Forward direction:} Suppose $X \in \anin{\mathcal{P}}{\mathbf{Y}}$ and let $\mathcal{G} \in \mathbf{G}$. There exists a directed path from $X$ to $\mathbf{Y}$ in $\mathcal{G}$. Owing to Lemma \ref{lmm:dp_implies_unshielded_dp}, there exists an unshielded directed path from $X$ to $\mathbf{Y}$ in $\mathcal{G}$. Therefore, there exists an unshielded possibly directed path $p$ from $X$ to $\mathbf{Y}$ in $\mathcal{P}$. Suppose $X \notin \anin{\mathcal{P}}{\mathbf{Y}}$. Then no unshielded possibly directed path from $X$ to $\mathbf{Y}$ in $\mathcal{P}$ is out of $X$ due to Lemma \ref{lmm:updp_out_of_x}. Let $Z$ be the successor $X$ on $p$ and $\mathcal{G} \in \mathbf{G}$ such that edge $X \undirectededge Z$ is oriented as $X \leftarrow Z$ in $\mathcal{G}$. There exists a directed path from $X$ to $\mathbf{Y}$ that does not go through $Z$ in $\mathcal{G}$, because otherwise a directed cycle would occur in $\mathcal{G}$. Lemma \ref{lmm:dp_implies_unshielded_dp} then implies that there exists an unshielded directed path from $X$ to $\mathbf{Y}$ that does not go through $Z$ in $\mathcal{G}$. Therefore, there exists an unshielded possibly directed path from $X$ to $\mathbf{Y}$ that does not go through $Z$ in $\mathcal{P}$.

Suppose that for each pair of unshielded possibly directed paths $(X, Z_1, \ldots, Y_1)$ and $(X, Z_2, \ldots, Y_2)$ in $\mathcal{P}$ such that $Z_1 \neq Z_2$, $Y_1 \in \mathbf{Y}$, and $Y_2 \in \mathbf{Y}$, $Z_1$ and $Z_2$ are adjacent in $\mathcal{P}$. Let $\mathcal{U}$ be the subgraph of $\mathcal{P}$ containing only the undirected edges of $\mathcal{P}$ and $\mathbf{Z}$ be the set of nodes $Z$ such that an unshielded possibly directed path $(X, Z, \ldots, Y)$ ($Y \in \mathbf{Y}$) exists in $\mathcal{P}$. $\{X\} \cup \mathbf{Z}$ is a clique in $\mathcal{U}$; therefore, it is contained in some maximal clique $\mathbf{M}$. Let $\mathcal{T}$ be a join tree for $\mathcal{U}$ with $\mathbf{M}$ as the root, $\pi$ be a tree order for $\mathcal{T}$, $\prec'_{\pi}$ be the extension of $\prec_{\pi}$ such that $Z \prec'_{\pi} X$ for each $Z \in \mathbf{Z}$, and $\alpha$ be a total order which extends $\prec'_{\pi}$. Owing to Lemma \ref{lmm:consistent_order}, $\alpha$ is a consistent order with respect to $\mathcal{U}$. Let $\mathcal{G} \in \mathbf{G}$ be such that $\mathcal{U}$ is oriented according to $\alpha$. There exists a directed path from $X$ to $Y \in \mathbf{Y}$ in $\mathcal{G}$. Owing to Lemma \ref{lmm:dp_implies_unshielded_dp}, there exists an unshielded directed path from $X$ to $Y$ in $\mathcal{G}$. Therefore, there exists an unshielded possibly directed path from $X$ to $Y$ in $\mathcal{P}$ that is out of $X$ in $\mathcal{G}$. This is a contradiction. Thus, there exists a pair of unshielded possibly directed paths $p_1 = (X, Z_1, \ldots, Y_1)$ and $p_2 = (X, Z_2, \ldots, Y_2)$ in $\mathcal{P}$ such that $Z_1 \neq Z_2$, $Y_1 \in \mathbf{Y}$, $Y_2 \in \mathbf{Y}$, and $\nonadjacencyin{\mathcal{P}}{Z_1}{Z_2}$.

Owing to Corollary \ref{crl:updp_form}, $p_1$ is of the form $X \undirectededge Z_1 \cdots \undirectededge \rightarrow \cdots \rightarrow Y_1$ and $p_2$ is of the form $X \undirectededge Z_2 \cdots \undirectededge \rightarrow \cdots \rightarrow Y_2$. Let $U_1$ be the first ancestor of $Y_1$ on $p_1$ (which may be $Y_1$ itself), $U_2$ be the first ancestor of $Y_2$ on $p_2$ (which may be $Y_2$ itself), and $s = p_1(X, U_1)^{-1} \oplus p_2(X, U_2)$. Suppose that a subpath $s'$ of $s$ is a simple cycle. Lemma \ref{lmm:chordal_graph_simple_cycle_unshielded_triple} says that two shielded triples exist on $s'$, which is a contradiction. Therefore, $s$ is a simple path. Thus, $X$ is on an unshielded undirected path in $\mathcal{P}$ between two members of $\anin{\mathcal{P}}{\mathbf{Y}}$ in $\mathcal{P}$.

\emph{Reverse direction:} Suppose $X \notin \anin{\mathcal{P}}{\mathbf{Y}}$ and that $X$ is an interior node on an unshielded undirected path $p$ from $Z_1 \in \anin{\mathcal{P}}{\mathbf{Y}}$ to $Z_2 \in \anin{\mathcal{P}}{\mathbf{Y}}$ in $\mathcal{P}$. Let $U_1$ and $U_2$ be the nodes on either side of $X$ on $p$ and $\mathcal{G} \in \mathbf{G}$. Suppose that $U_1 \undirectededge X$ in $\mathcal{P}$ is oriented as $U_1 \leftarrow X$ in $\mathcal{G}$. Then $p(Z_1, X)^{-1}$ is a directed path in $\mathcal{G}$, because otherwise there would be unshielded colliders in $\mathcal{G}$ that are not in $\mathcal{P}$. Suppose that $U_1 \undirectededge X$ in $\mathcal{P}$ is oriented as $U_1 \rightarrow X$ in $\mathcal{G}$. Then edge $X \undirectededge U_2$ in $\mathcal{P}$ is oriented as $X \rightarrow U_2$ in $\mathcal{G}$ and $p(X, Z_2)$ is a directed path in $\mathcal{G}$,  because otherwise there would be unshielded colliders in $\mathcal{G}$ that are not in $\mathcal{P}$. Therefore, $X \in \anin{\mathcal{G}}{\mathbf{Y}}$.
\end{proofln}

\begin{proof}[Proof of Lemma \ref{lmm:u_up_subgraph_is_tree}]
Let $\mathcal{T}$ be the subgraph of $\mathcal{P}$ containing only the nodes and the edges on unshielded undirected paths from $X$ to $\mathbf{V} \setminus \{X\}$ in $\mathcal{P}$. Clearly, $\mathcal{T}$ is a connected undirected graph. Suppose that there is a simple cycle $c$ in $\mathcal{T}$. Owing to Lemma \ref{lmm:chordal_graph_simple_cycle_unshielded_triple}, there exist two shielded triples on $c$, which is a contradiction. Therefore, $\mathcal{T}$ is a tree.
\end{proof}

\begin{proofln}[Proof of Theorem \ref{thm:selection_conditioning}]
\emph{Forward direction:} Suppose $P \in \bnmodel(\mathcal{G})[^{S = \hat{s}}$. Then there exists $Q \in \bnmodel(\mathcal{G})$ such that $Q[^{S = \hat{s}} = P$. Since $p(\painvalues{\mathcal{G}}{S}) > 0$, $q(\painvalues{\mathcal{G}}{S}, \hat{s}) > 0$. Let $R \in \bnmodel(\mathcal{G}_{\mathbf{O}})$ such that $r(X \mid \painvalues{\mathcal{G}_{\mathbf{O}}}{X}) = q(X \mid \painvalues{\mathcal{G}}{X})$ for each $X \in \mathbf{O}$. Then
\begin{align*}
	p(\mathbf{o} \setminus \painvalues{\mathcal{G}}{S} \mid \painvalues{\mathcal{G}}{S}) &= q(\mathbf{o} \setminus \painvalues{\mathcal{G}}{S} \mid \painvalues{\mathcal{G}}{S}, \hat{s}) = \frac{q(\mathbf{o}, \hat{s})}{q(\painvalues{\mathcal{G}}{S}, \hat{s})} \\
	&= \frac{q(\hat{s} \mid \painvalues{\mathcal{G}}{S}) \prod_{O \in \mathbf{O}} q(o \mid \painvalues{\mathcal{G}}{O})}{q(\hat{s} \mid \painvalues{\mathcal{G}}{S}) \sum_{\mathbf{o} \setminus \painvalues{\mathcal{G}}{S}} \prod_{O \in \mathbf{O}} q(o \mid \painvalues{\mathcal{G}}{O})} \\
	&= \frac{\prod_{O \in \mathbf{O}} q(o \mid \painvalues{\mathcal{G}}{O})}{\sum_{\mathbf{o} \setminus \painvalues{\mathcal{G}}{S}} \prod_{O \in \mathbf{O}} q(o \mid \painvalues{\mathcal{G}}{O})} \\
	&= \frac{\prod_{O \in \mathbf{O}} r(o \mid \painvalues{\mathcal{G}_{\mathbf{O}}}{O})}{\sum_{\mathbf{o} \setminus \painvalues{\mathcal{G}}{S}} \prod_{O \in \mathbf{O}} r(o \mid \painvalues{\mathcal{G}_{\mathbf{O}}}{O})} \\
	&= \frac{r(\mathbf{o})}{r(\painvalues{\mathcal{G}}{S})} = r(\mathbf{o} \setminus \painvalues{\mathcal{G}}{S} \mid \painvalues{\mathcal{G}}{S})
\end{align*}

Therefore, $P[^{\pain{\mathcal{G}}{S}} \in \bnmodel(\mathcal{G}_{\mathbf{O}})[^{\pain{\mathcal{G}}{S}}$.
	
\emph{Reverse direction:} Suppose $P[^{\pain{\mathcal{G}}{S}} \in \bnmodel(\mathcal{G}_{\mathbf{O}})[^{\pain{\mathcal{G}}{S}}$. Therefore, there exists $Q \in \bnmodel(\mathcal{G}_{\mathbf{O}})$ such that $Q[^{\pain{\mathcal{G}}{S}} = P[^{\pain{\mathcal{G}}{S}}$. Let $R \in \bnmodel(\mathcal{G})$ such that $r(o \mid \painvalues{\mathcal{G}}{O}) = q(o \mid \painvalues{\mathcal{G}_{\mathbf{O}}}{O})$ for each $O \in \mathbf{O}$ and $r(\hat{s} \mid \painvalues{\mathcal{G}}{S}) = c \cdot p(\painvalues{\mathcal{G}}{S}) / q(\painvalues{\mathcal{G}}{S})$, where $c = 1 / \max_{\painvalues{\mathcal{G}}{S}}(p(\painvalues{\mathcal{G}}{S})/q(\painvalues{\mathcal{G}}{S}))$ (ensuring that $r(\hat{s} \mid \painvalues{\mathcal{G}}{S}) \le 1$). Then
\begin{align*}
r(\mathbf{o}, \hat{s}) &= r(\hat{s} \mid \painvalues{\mathcal{G}}{S}) \prod_{O \in \mathbf{O}} r(o \mid \painvalues{\mathcal{G}}{O}) = r(\hat{s} \mid \painvalues{\mathcal{G}}{S}) \prod_{O \in \mathbf{O}} q(o \mid \painvalues{\mathcal{G}_{\mathbf{O}}}{O}) \\
&= r(\hat{s} \mid \painvalues{\mathcal{G}}{S}) q(\mathbf{o}) = r(\hat{s} \mid \painvalues{\mathcal{G}}{S}) q(\painvalues{\mathcal{G}}{S}) q(\mathbf{o} \setminus \painvalues{\mathcal{G}}{S} \mid \painvalues{\mathcal{G}}{S}) \\
&= c \cdot \frac{p(\painvalues{\mathcal{G}}{S})}{q(\painvalues{\mathcal{G}}{S}} q(\painvalues{\mathcal{G}}{S}) p(\mathbf{o} \setminus \painvalues{\mathcal{G}}{S} \mid \painvalues{\mathcal{G}}{S}) = c \cdot p(\mathbf{o})
\end{align*}
and $r(\hat{s}) = \sum_{\mathbf{o}} r(\mathbf{o}, \hat{s}) = c \sum_{\mathbf{o}} p(\mathbf{o}) = c > 0$. Therefore, 
\begin{align*}
&r(\mathbf{o} \mid \hat{s}) = \frac{r(\mathbf{o}, \hat{s})}{r(\hat{s})} = \frac{c \cdot p(\mathbf{o})}{c} = p(\mathbf{o})
\end{align*}
Thus, $P \in \bnmodel(\mathcal{G})[^{S = \hat{s}}$.
\end{proofln}

\bibliographystyle{plainnat}
\bibliography{../../Common/refs}

\end{document}